\newcommand{\C}{\mathbb{C}}
\newcommand{\R}{\mathbb{R}}
\newcommand{\Z}{\mathbb{Z}}
\newcommand{\N}{\mathbb{N}}
\theoremstyle{plain}
\newtheorem{theorem}{Theorem}[section]
\newtheorem{proposition}[theorem]{Proposition}
\newtheorem{lemma}[theorem]{Lemma}
\newtheorem{Corollary}[theorem]{Corollary}
\theoremstyle{definition}
\newtheorem{remark}[theorem]{Remark}
\newtheorem{example}[theorem]{Example}
\begin{document}


\title{On the singular sets of solutions to the  
Kapustin--Witten equations and the 
Vafa--Witten ones on compact K\"{a}hler surfaces 
} 
\author{  
Yuuji Tanaka}
\date{}


\maketitle

\begin{abstract}
This article finds a structure of singular sets on compact K\"{a}hler surfaces, which Taubes introduced in the studies of the asymptotic analysis of solutions to the Kapustin--Witten equations and the Vafa--Witten ones  originally on smooth four-manifolds.   
These equations can be seen as real four-dimensional analogues of the Hitchin equations on Riemann surfaces, and one of common obstacles to be overcome is a certain unboundedness of solutions to these equations, especially of the "Higgs fields". 
The singular sets by Taubes describe part of the limiting behaviour of a sequence of solutions with this unboundedness property, and Taubes proved that the real two-dimensional Haussdorff measures of these singular sets are finite.  
In this article, we look into the singular sets, when the underlying manifold is a compact K\"{a}hler surface, and find out that they have the structure of an analytic subvariety in this case. 

\end{abstract}





\section{Introduction}

Two sets of gauge-theoretic equations that we consider in this article originated in $\mathcal{N}=4$ super Yang--Mills theory in Theoretical Physics; they appear after {\it topological twists} of the theory. 
However, we introduce these equations as 4-dimensional analogues of Hitchin's  equations on Riemann surfaces, 
since many ideas and techniques for the Hitchin equations seem to be enhanced for the studies of these equations.

\paragraph{Hitchin's equations on compact Riemann surfaces.} 

Let $\Sigma$ be a compact Riemann surface with genus greater than one, and let $E$ be a holomorphic vector bundle on $\Sigma$. Hitchin \cite{Hi} introduced the following equations seeking for a pair $(A, \Phi)$ consisting of a connection $A$ on $E$ and a section $\Phi$ of $\text{End}(E) \otimes \Lambda^{1,0}$, where $\Lambda^{1,0} := ( T^* X \otimes \C)^{1,0}$. 

\begin{gather}
\bar{\partial}_A \Phi =0,  \\
F_A + [ \Phi, \Phi^* ] = 0. 
\end{gather}

The outcome of the studies of the Hitchin equations has been wonderfully abundant, 
so perhaps one may like to look at similar equations in higher dimensions, say, on a complex surface. 
In fact, the two sets of equations that we consider in this article are ones of them.   
In that case, since $\Lambda^{1,0}$ can be thought of as either the holomorphic cotangent bundle $\Omega^1_{\Sigma}$ or the canonical bundle $K_{\Sigma}$ on a Riemann surface, 
there could be at least two possibilities of analogues of the Hitchin equations for a complex surface $X$: 
one is obtained by looking at a section of $\text{End}(E) \otimes \Omega^1_X$;  
and the other one considers a section of $\text{End}(E) \otimes K_X$. 
The former is the Kapustin--Witten equations, and the latter is the  Vafa--Witten equations as described below.

\paragraph{The Kapustin--Witten equations on closed four-manifolds.} 

Let $X$ be a compact, oriented, Riemannian four-manifold, and let $P \to X$
be a principal $SO(3)$ or $SU(2)$ bundle over $X$. 
We denote by $\mathfrak{g}_{P}$ the adjoint bundle of $P$, and by
$\mathcal{A}_{P}$ the space of connections on $P$. 
By using the Riemannian metric, we decompose the space of two-forms as
$\Omega^2 (X) = \Omega^{+} (X) \oplus \Omega^{-} (X)$. 
We call elements in $\Omega^{+} (X)$ {\it self-dual two-forms}, and ones in
$\Omega^{-}(X)$ {\it anti-self-dual two-forms}.

We consider the following version of the {\it Kapustin--Witten
equations}, that ask a pair $(A, \mathfrak{a}) \in \mathcal{A}_{P}
\times \Gamma ( X, \mathfrak{g}_{P} \otimes T^{*} X )$ to satisfy    
\begin{gather}
d_{A}^{*} \mathfrak{a} =0 , \quad ( d_{A} \mathfrak{a} )^{-} =0  ,  
\label{eq:KW1} \\
F_{A}^{+} - [\mathfrak{a} \wedge \mathfrak{a} ]^{+} =0, 
\label{eq:KW2}
\end{gather}
where $F_{A}$ is the curvature two-form of the connection $A$, and
superscripts ``-'' and ``+'' indicate taking the anti-self-dual part or
self-dual part respectively. 
These equations were introduced by Kapustin and Witten \cite{KW} (see also \cite{Wi}) in the
context of the geometric Langlands programme from the viewpoint of
$\mathcal{N}=4$ super Yang--Mills theory in four dimensions. 
See also a paper by Gagliardo and Uhlenbeck \cite{GU}. 
Note that the equation $d_{A}^{*} \mathfrak{a} =0$ makes \eqref{eq:KW1}
and \eqref{eq:KW2} an elliptic
system after a gauge fixing equation. 
Note also that there are no solutions to the equations \eqref{eq:KW1}
and \eqref{eq:KW2} in the case when $X$ is compact if the first
Pontrjagin number of $\mathfrak{g}_{P}$ is positive (see \cite[\S
3.3]{KW}, \cite[\S 2.b)]{T2}).

\begin{remark}
In \cite{KW}, Kapustin and Witten originally obtained the following family of equations parametrized by $\tau \in \mathbb{P}^2$ from a topological twist of $\mathcal{N}=4$ super Yang--Mills theory in 4 dimensions.  
\begin{gather}
( F_{A} - [\mathfrak{a} \wedge \mathfrak{a} ] + \tau d_{A} \mathfrak{a} )^{+} =0,  
\label{eq:pKW1} \\
( F_{A} - [\mathfrak{a} \wedge \mathfrak{a} ] - \tau^{-1} d_{A} \mathfrak{a} )^{-} =0, 
\label{eq:pKW2} \\
d_{A}^{*} \mathfrak{a} =0 . 
\label{eq:pKW3}  
\end{gather}
The equations \eqref{eq:KW1} and \eqref{eq:KW2} are the case for $\tau =0$ in the above.  
The set of equations \eqref{eq:pKW1}, \eqref{eq:pKW2} and \eqref{eq:pKW3} for $\tau = \{ \infty \}$ is the ``orientation reversed'' one to the equations \eqref{eq:KW1} and \eqref{eq:KW2}. 
Kapustin and Witten deduced that there are no solutions to the equations \eqref{eq:pKW1}, \eqref{eq:pKW2} and \eqref{eq:pKW3} for $\tau \in \R \setminus \{ 0 \}$ on a compact 4-manifold with the first Pontrjagin class of $\mathfrak{g}_{P}$ being non zero; 
and for these cases solutions to the equations \eqref{eq:pKW1}, \eqref{eq:pKW2} and \eqref{eq:pKW3} on compact four-manifolds are flat $SL(2, \C)$ connections. 
Note that, if $\tau$ is not real, the equations \eqref{eq:pKW1}, \eqref{eq:pKW2} and \eqref{eq:pKW3} are overdetermined. 
Kapustin and Witten presented the relevance of the values $\tau = \pm 1, \pm i$ to the geometric Langlands programme in \cite{KW}. 
For the recent progress for the equations for $\tau = -1$ with the Nahm pole  boundary conditions, see 
\cite{He1}, \cite{He2}, \cite{HM}, \cite{MW1}, \cite{MW2} and \cite{LT}. 
\end{remark}

\paragraph{Taubes' analysis on $SL(2, \C)$ connections and the singular sets.}

The analytic properties of solutions to the above equations were successfully
revealed by Taubes. 
In \cite{T2}, Taubes studied the Uhlenbeck style compactness problem for
$SL(2,\C)$-connections, including solutions to the above equations, on
four-manifolds (see also \cite{T1}, \cite{T3}). 
One of the major difficulties to be overcome here was the lack of a priori boundedness of the 
``extra fields'' $\mathfrak{a}$. 
However, Taubes introduces some real codimension two singular set, to be denoted $Z$,
outside which a sequence of ``partially rescaled''
$SL(2,\C)$-connections 
converges after gauge transformations except bubbling
out at a finite set of points 
(see also \cite{D}, \cite{H}, \cite{HW}, \cite{Mo}, \cite{MSWW}, \cite{MSWW2}, \cite{Tak} and \cite{Tan1} for
related problems). 
We describe this briefly 
in Section \ref{sec:rtaubes}.

\paragraph{The Kapustin-Witten equations on compact K\"{a}hler surfaces and the singular sets.} 

Our interest in this article is the structure of the above singular set
$Z$. 
In fact, Taubes further studied the structure of the singular set $Z$
in the general setting \cite{T3} (see also \cite{Tak}), however, 
we restrict ourselves to the K\"{a}hler surface case.

To state our observation in this article, let $X$ be a compact K\"{a}hler surface with K\"{a}hler form $\omega$, and let $E$ be a Hermitian vector bundle on $X$ with Hermitian metric $h$. 
We denote by $\mathcal{A}_{(E,h)}$ the space of all unitary connections on $E$, 
and by $\mathfrak{u} (E) = \text{End}(E,h)$ the bundle of skew-Hermitian endomorphisms 
of $E$.  
Then the equations \eqref{eq:KW1} and \eqref{eq:KW2} take the following form for 
a pair $(A,\phi) \in \mathcal{A}_{(E,h)} \times \Omega^{1,0} (\mathfrak{u} (E)) $: 
\begin{gather}
\bar{\partial}_{A} \phi = 0, \quad [ \phi \wedge \phi ] =0, 
\label{eq:S1}\\
F_{A}^{0,2} = 0, \quad \Lambda \left( F_{A}^{1,1} + 2 [ \phi  \wedge \phi^{*}] \right) =0 , 
\label{eq:S2} 
\end{gather}
where $\Lambda$ is the adjoint of $\wedge \omega$.  
For the deduction of these, see \cite[\S 6(iii)]{N} or Section \ref{sec:eqK} to this article.  
These are the equations studied by Simpson in \cite{Si}. 
(Simpson considered these in arbitrary dimensions.)

We now assume the rank of $E$ to be two. 
We consider a sequence $\{ (A_n , \phi_n )\}$ of solutions to Simpson's equations \eqref{eq:S1} and \eqref{eq:S2}, and apply the above Taubes analysis in \cite{T2} to it, putting $r_n := || \phi_n ||_{L^2}$ and assuming that $\{ r_n \}$ has no bounded subsequence. 
We then obtain a singular set, to be denoted by $Z$, outside which the sequence $\{ (A_n , \phi_n / r_n \} $ has an $L^2_1$ convergent subsequence after a suitable choice of gauge transformations except bubbling out at a finite set of points in $X$. 
In this article, we prove the following.

\begin{theorem}[Corollary \ref{cor:hs}]
The singular set $Z$ has the structure of an analytic subvariety of $X$. 
\label{th:main}
\end{theorem}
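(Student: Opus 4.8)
The plan is to realise $Z$ as the zero locus of a single \emph{global holomorphic section} of a fixed holomorphic bundle on $X$, after which analyticity is automatic. The section will be built not from the fields themselves but from their gauge-invariant spectral data, which behave far better in the limit $r_n \to \infty$. Writing $\hat\phi_n := \phi_n/r_n$, the first step is to observe that for each $n$ the determinant $q_n := \det\hat\phi_n$, formed with the symmetric product on the $\Omega^{1,0}$-factor, is a smooth section of $\mathrm{Sym}^2\Omega^{1,0}$, and that it is holomorphic. Indeed, the integrability $F_{A_n}^{0,2}=0$ from \eqref{eq:S2} makes $\dbar_{A_n}$ a genuine holomorphic structure on $\mathrm{End}(E)$, and since $\det$ is an $\mathrm{Ad}$-invariant polynomial, the connection term in $\dbar_{A_n}\hat\phi_n=0$ (from \eqref{eq:S1}) drops out under its differential, leaving $\dbar q_n=0$ for the base Dolbeault operator. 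Being gauge-invariant, $q_n$ is moreover unaffected by the gauge transformations appearing in Taubes' construction.

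The second step is to pass to the limit at the level of these invariants. Since $|q_n|\le c\,|\hat\phi_n|^2$ pointwise, the normalisation $\|\hat\phi_n\|_{L^2}=1$ gives a uniform bound $\|q_n\|_{L^1}\le c$. But $q_n$ lives in the \emph{finite-dimensional} space $H^0(X,\mathrm{Sym}^2\Omega^{1,0})$, on which all norms are equivalent; hence a subsequence converges, $q_n\to q$, to a global holomorphic section $q\in H^0(X,\mathrm{Sym}^2\Omega^{1,0})$. This is the device that sidesteps the delicate elliptic work near the bubbling points: global holomorphicity is inherited for free by the finite-dimensional limit, so no Remmert--Stein type extension across the singular locus is needed.

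The third step identifies $\{q=0\}$ with $Z$. Let $\Theta$ be the finite bubbling set and let $(\hat A_\infty,\hat\phi_\infty)$ be the $L^2_1$ limit on $X\setminus(Z\cup\Theta)$; there $q=\det\hat\phi_\infty$ by continuity. The crucial point is that $\hat\phi_\infty$ is normal: letting $r_n\to\infty$ in $\Lambda\big(F_{A_n}^{1,1}+2[\hat\phi_n\wedge\hat\phi_n^*]\big)=0$ forces $[\hat\phi_\infty\wedge\hat\phi_\infty^*]=0$, which together with $[\hat\phi_\infty\wedge\hat\phi_\infty]=0$ makes $\hat\phi_\infty$ pointwise simultaneously unitarily diagonalisable. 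For such a field, vanishing of the determinant (both eigenvalue $1$-forms zero) is equivalent to vanishing of the field itself, so on the convergence region $\{q=0\}$ meets $\{\hat\phi_\infty\ne 0\}$ nowhere, while $q=0$ on $Z$ because $|\hat\phi_\infty|\to 0$ there. This gives $Z\subseteq\{q=0\}\subseteq Z\cup\Theta$. Provided the normalised limit is nontrivial, $\hat\phi_\infty\not\equiv 0$, whence $q\not\equiv 0$ and $\{q=0\}$ is a proper analytic subvariety; any point of $\{q=0\}\setminus Z$ lies in the finite set $\Theta$ and, as $Z$ is closed, is isolated in $\{q=0\}$, so deleting these zero-dimensional components leaves an analytic set equal to $Z$. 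The zero locus of a holomorphic section of a holomorphic vector bundle being analytic, this proves the theorem.

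I expect the main obstacle to be precisely this third step. Extracting the normality of $\hat\phi_\infty$ rigorously from the $r_n\to\infty$ limit of \eqref{eq:S2}, and ruling out the degenerate possibility that all the $L^2$-mass of $\hat\phi_n$ concentrates on $Z$ (which would give $q\equiv 0$ and an identically nilpotent limit, destroying the identification), are the delicate points where the analytic estimates underlying Taubes' construction in Section~\ref{sec:rtaubes} must be invoked. Once nontriviality and normality of the limit are in hand, the remaining passage through the finite set $\Theta$ is formal.
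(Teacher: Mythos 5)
Your proposal is correct and is essentially the paper's own argument: the paper likewise realises $Z$ (up to the finite bubbling set $\Theta$) as the zero locus of the limit of the gauge-invariant quadratic sections $\mathrm{tr}(\varphi_i\otimes\varphi_i)\in H^0\bigl(X,(\Omega^1_X)^{\otimes 2}\bigr)$ --- which for trace-free fields is just $-2\det\hat\phi_i$ --- and identifies that zero locus with $Z$ off $\Theta$ via the pointwise inequality $|\Phi|^4\le 4|\det\Phi|^2+|[\Phi,\Phi^*]|^2$ combined with $[\phi\wedge\phi]=0$, exactly paralleling your diagonalisability step. The two delicate points you flag dissolve within Taubes' framework: normality of the limit needs no division of the moment-map equation by $r_n^2$, since the $C^0$-limit $\tau_\Delta\circ\mu$ takes values in a real line subbundle of $\mathfrak{u}(E_\Delta)$ and is therefore pointwise normal, and nontriviality is automatic because $|\hat\varphi_\diamond|$ has unit $L^2$ norm, so $Z\neq X$ and your own pointwise argument on $X\setminus(Z\cup\Theta)$ already forces $q\not\equiv 0$.
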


This can be done by a simple observation that a singular set is 
identified with the zero set of a section of the holomorphic bundle $(\Omega_{X}^1)^{\otimes 2}$ on $X$ up to a finite set of points.

\paragraph{The Vafa--Witten equations on closed four-manifolds.}

We next consider another analogue of the Hitchin equations, called the {\it Vafa--Witten equations} \cite{VW}. 
The Vafa--Witten equations look for a triple $(A, B, \Gamma)$ consisting of a connection $A$ on a principal $G$-bundle with $G$ being a compact Lie group over a smooth oriented Riemannian four-manifold $X$; 
a section $B$ of the associated bundle $\mathfrak{g}_{P} \otimes \Lambda^{+}$;  
and a section $\Gamma$ of $\mathfrak{g}_{P}$, 
where $\mathfrak{g}_{P}$ is the adjoint bundle of $P$ and $\Lambda^+$ is the self-dual part of $\Lambda^2 T^*X$. 
The exact form of the equations is as follows. 
\begin{gather}
d_A^* B  + d_A \Gamma =0, 
\label{VW1} \\
F_A^+ + [B.B] + [B, \Gamma] =0,
\label{VW2}
\end{gather}
where $F_A^+$ is the self-dual part of the curvature of the connection $A$, and $[B.B] \in \Gamma (X, \mathfrak{g}_{P} \otimes \Lambda^+)$ is defined through the Lie brackets of $\mathfrak{g}_{P}$ and $\Lambda^+$ (see \cite[\S A.1.6]{M} or \cite[\S 2]{Tan1} for details).

Taubes \cite{T4} proved an analogous theorem to the Kapustin--Witten equations also for the Vafa--Witen equations case. 
In addition, our observation on the singular sets holds for the Vafa--Witten equations case, 
namely, the singular set can be identified with the zero set of a section of the square of the canonical bundle of $X$, so it also has a structure of an analytic subvariety of $X$ (Corollary \ref{cor2}).

\vspace{0.3cm}

The organization of this article is as follows.  
In Section \ref{sec:KW}, we deal with the Kapustin-Witten equations. 
Section \ref{sec:rtaubes} is a brief review of the results by Taubes in \cite{T2}. 
In Section \ref{sec:eqK}, we deduce that the Kapustin--Witten equations on a compact  K\"{a}hler surface are the same as Simpson's equations.  
Then Theorem \ref{th:main} is proved in Section \ref{sec:stsing}.   
We treat the Vafa--Witten equations in Section \ref{sec:VW}. 
In Sections \ref{sec:rtaubesVW}, we review results by Taubes \cite{T4} on the Vafa--Witten equations on smooth four-manifolds. 
We then consider the equations on compact K\"{a}hler surfaces, 
and prove a similar statement to Theorem \ref{th:main} for the Vafa--Witten case in Section \ref{sec:singVW}.

\paragraph{Acknowledgements.}
I would like to thank Cliff Taubes for drawing my attention to the Kapustin--Witten equations and  generously helpful suggestions about these.  
I have a huge debt to him for his insight and ingenious computations which had  tremendously inspired this article. 
I am grateful to Mikio Furuta, Siqi He, Tomoyuki Hisamoto, Hiroshi Iritani and Sakura Sch\"{a}fer-Nameki for useful discussions and valuable comments around the subject. 
I would also like to thank Ben Mares for helpful comments on the computations in Section \ref{sec:eqK}.  
I am also grateful to Seoul National University, NCTS at National Taiwan University, Kyoto University, BICMR at Peking University and Institut des Hautes \'{E}tudes Scientifiques for support and hospitality, where part of this work was done during my visits in 2015--17. 
This work was partially supported by JSPS Grant-in-Aid for Scientific Research Nos. 15H02054 and 16K05125; and a Simons Collaboration Grant on `Special holonomy in Geometry, Analysis and Physics'.

\section{The singular sets of solutions to the Kapustin--Witten equations}
\label{sec:KW}

\subsection{Results by Taubes}
\label{sec:rtaubes}

In this section, we briefly describe the work by Taubes on the Uhlenbeck
style compactness for $SL(2,
\C)$-connections on closed four-manifolds \cite{T2}. 
In \cite{T2}, Taubes proved the results described in this subsection in the framework of the equation \eqref{eq:pKW1}, \eqref{eq:pKW2} and \eqref{eq:pKW3} for $\tau  \in \R \cup \{ \infty \}$ on closed oriented Riemannian four-manifolds; and also on $M \times \mathbb{I}$, where $M$ is a closed oriented Riemannian three-manifold, and $\mathbb{I}$ is a closed bounded interval. 
Our description here is in the style for the case $\tau=0$, namely, for the equations \eqref{eq:KW1} and \eqref{eq:KW2}.

Let $X$ be a compact, oriented, Riemannian 4-manifold, and let $P \to X$
be a principal $G$-bundle over $X$. 
We take $G$ to be $SO(3)$ or $SU(2)$ in the below.  
We denote by $\mathfrak{g}_{P}$ the adjoint bundle of $P$.

We consider a sequence $\{ (A_{n} , \mathfrak{a}_{n}) \}_{n \in \N}$ 
of solutions to the equation \eqref{eq:KW1} and \eqref{eq:KW2}.  
We put $r_{n} := || \mathfrak{a}_{n}||_{L^2}$, and take $a_{n} :=
\mathfrak{a}_{n} / r_{n}$ for each $n \in \N$. 
Note that the pairs $(A_{n} , a_{n})$ satisfy the following. 
\begin{gather}
( d_{A_n} a_n )^{-} =0  , \quad d_{A_n}^{*} a_n =0 , 
\label{eq:rKW1} \\
F_{A_n}^{+} - r_n^2 [ a_n \wedge a_n ] ^{+} =0,
\label{eq:rKW2} 
\end{gather}
where $r_n >0$.

In the case that $\{ r_n \}$ has a bounded subsequence, the Uhlenbeck
compactness theorem with the Bochner--Weitzenb\"{o}ck formula deduces
the following.

\begin{proposition}[\cite{T2}]
If $\{ r_{n} \}$ has a bounded subsequence, then  
there exist a principal $G$-bundle $P_{\Delta} \to X$ and a pair 
$(A_{\Delta} , \mathfrak{a}_{\Delta})$, where $A_{\Delta}$ is a
 connection on $P_{\Delta}$ and $\mathfrak{a}_{\Delta}$ is a section
 of $\mathfrak{g}_{P} \otimes T^{*} X$, which satisfies the equations
 \eqref{eq:KW1} and \eqref{eq:KW2}, and a finite set $\Theta \subset X$;  
a subsequence $\Xi \subset \N$; and a sequence $\{ g_{n} \}_{n \in \Xi}$
 of automorphisms of $P_{\Delta}|_{X \setminus \Theta}$ such that 
the sequence $\{ (g_{n}^{*} A_{n} , g_{n}^{*} \mathfrak{a}_{n})\}_{ n \in \Xi}$ converges 
to $(A_{\Delta}
 , \mathfrak{a}_{\Delta})$ in the $C^{\infty}$-norm on compact subsets
 in $X \setminus \Theta$. 
\end{proposition}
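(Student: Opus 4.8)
The plan is to run Uhlenbeck's compactness machinery, whose one genuine input in the bounded regime is an a priori curvature estimate extracted from the Bochner--Weitzenb\"{o}ck formula. Since $\{r_n\}$ has a bounded subsequence, I first pass to it and extract once more so that $r_n \to r_\infty$ for some $r_\infty \in [0,\infty)$; then $\|\mathfrak{a}_n\|_{L^2}=r_n$ is uniformly bounded. Because $\mathfrak{a}_n$ lies in the kernel of $d_{A_n}^{*}\oplus(d_{A_n}\,\cdot\,)^{-}$ by \eqref{eq:KW1}, the Weitzenb\"{o}ck formula for the Hodge Laplacian on $\mathfrak{g}_{P}$-valued $1$-forms expresses $\nabla_{A_n}^{*}\nabla_{A_n}\mathfrak{a}_n$ through the Ricci curvature of $X$ and the algebraic action of $F_{A_n}$ on $\mathfrak{a}_n$; invariance of the Killing form turns the latter into the pointwise pairing of $F_{A_n}$ with $[\mathfrak{a}_n\wedge\mathfrak{a}_n]$, and \eqref{eq:KW2} makes its self-dual part equal to $\bigl|[\mathfrak{a}_n\wedge\mathfrak{a}_n]^{+}\bigr|^{2}$. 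The upshot is an inequality of the shape
\[
\tfrac{1}{2}\,\Delta\,|\mathfrak{a}_n|^{2}+|\nabla_{A_n}\mathfrak{a}_n|^{2}
+c_{0}\,\bigl|[\mathfrak{a}_n\wedge\mathfrak{a}_n]^{+}\bigr|^{2}
\le c_{1}\,|\mathfrak{a}_n|^{2}
+\bigl\langle F_{A_n}^{-},[\mathfrak{a}_n\wedge\mathfrak{a}_n]^{-}\bigr\rangle ,
\]
with $c_{0},c_{1}$ depending only on the geometry of $X$. Integrating over $X$, using that $\int_X(|F_{A_n}^{+}|^{2}-|F_{A_n}^{-}|^{2})$ is the fixed topological multiple of the first Pontrjagin number of $\mathfrak{g}_{P}$, and absorbing the anti-self-dual term, yields uniform bounds on $\int_X|F_{A_n}|^{2}$ and $\int_X|\nabla_{A_n}\mathfrak{a}_n|^{2}$, and, after the ensuing elliptic improvement, a uniform pointwise bound $\|\mathfrak{a}_n\|_{L^{\infty}}\le C$.

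With $\int_X|F_{A_n}|^{2}$ uniformly bounded I invoke Uhlenbeck's weak compactness theorem on the closed four-manifold $X$. It provides a finite set $\Theta\subset X$ of curvature-concentration points, a subsequence $\Xi\subset\N$, and gauge transformations $g_n$ over $X\setminus\Theta$ such that $\{g_n^{*}A_n\}$ converges weakly in $L^{2}_{1}$ on compact subsets of $X\setminus\Theta$. Patching the local Uhlenbeck gauges identifies the limiting bundle, over $X\setminus\Theta$, with $P_{\Delta}|_{X\setminus\Theta}$ for some principal $G$-bundle $P_{\Delta}$; its topology may differ from that of $P$ because of the Yang--Mills energy lost into the points of $\Theta$. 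Call the weak limit connection $A_{\Delta}$.

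Next I upgrade the convergence and pass to the limit. In the gauges $g_n$ the fields $g_n^{*}\mathfrak{a}_n$ are uniformly bounded (in $L^{\infty}$, hence in every $L^{p}$) and satisfy the first-order linear elliptic system \eqref{eq:KW1} whose coefficients are built from the converging connections; elliptic bootstrapping then promotes $\{g_n^{*}A_n\}$ and $\{g_n^{*}\mathfrak{a}_n\}$ to $C^{\infty}$-convergence on compact subsets of $X\setminus\Theta$, the limits being $A_{\Delta}$ and a section $\mathfrak{a}_{\Delta}$ of $\mathfrak{g}_{P_{\Delta}}\otimes T^{*}X$ there. Passing to the limit in \eqref{eq:KW1} and \eqref{eq:KW2} shows that $(A_{\Delta},\mathfrak{a}_{\Delta})$ solves the Kapustin--Witten equations on $X\setminus\Theta$. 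Finally, $A_{\Delta}$ has finite $L^{2}$ energy and $\Theta$ is a finite set, so Uhlenbeck's removable-singularity theorem extends $A_{\Delta}$, after a further gauge change, to a smooth connection on $P_{\Delta}$ over all of $X$; the bounded field $\mathfrak{a}_{\Delta}$, being a weak solution of the elliptic system for the extended connection away from a finite set, extends smoothly as well, and the extended pair satisfies \eqref{eq:KW1} and \eqref{eq:KW2} on all of $X$.

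The main obstacle is the a priori estimate of the first step, specifically the control of the anti-self-dual curvature term $\langle F_{A_n}^{-},[\mathfrak{a}_n\wedge\mathfrak{a}_n]^{-}\rangle$ in the displayed inequality: only the self-dual curvature is pinned down by \eqref{eq:KW2}, so closing the estimate requires playing this term against the energy and the Ricci and Weyl terms of $X$, and it is here that the boundedness of $\{r_n\}$ is indispensable. Indeed, in the rescaled equation \eqref{eq:rKW2} the quartic interaction carries the weight $r_n^{2}$, and once $\{r_n\}$ is unbounded it can no longer be dominated by the $L^{2}$-normalisation $\|a_n\|_{L^{2}}=1$; this failure is precisely the mechanism that produces the singular set $Z$ analysed in the remainder of the paper.
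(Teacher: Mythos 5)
Your architecture is the right one, and it is exactly the one the paper points to: the paper gives no proof of this proposition (it is quoted from Taubes \cite{T2}), justifying it with the single remark that ``the Uhlenbeck compactness theorem with the Bochner--Weitzenb\"{o}ck formula'' yields the statement. Your second and third steps --- uniform $L^2$ curvature bound $\Rightarrow$ Uhlenbeck weak compactness off a finite concentration set $\Theta$, with a possibly topologically different limit bundle $P_{\Delta}$, then elliptic bootstrap of the coupled first-order system to $C^{\infty}$-convergence on compact subsets of $X \setminus \Theta$, then removable singularities to extend $(A_{\Delta},\mathfrak{a}_{\Delta})$ across $\Theta$ --- are standard and correct. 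The genuine gap is in the first step, precisely at the point you yourself flag as ``the main obstacle'': your displayed Bochner inequality retains the pairing $\langle F_{A_n}^{-},[\mathfrak{a}_n\wedge\mathfrak{a}_n]^{-}\rangle$, and you close the estimate by asserting that this term can be ``absorbed''. No mechanism is given, and the naive one fails. Cauchy--Schwarz turns the pairing into $\epsilon\,\|F^{-}\|_{L^2}^2+\epsilon^{-1}\|[\mathfrak{a}\wedge\mathfrak{a}]^{-}\|_{L^2}^2$; the first summand can indeed be traded against $\|F^{+}\|_{L^2}^2=\|[\mathfrak{a}\wedge\mathfrak{a}]^{+}\|_{L^2}^2$ via the Pontrjagin identity, but the second is quartic in $\mathfrak{a}$ and of \emph{exactly the same size} as your unique coercive term: by invariance of the Killing form and the Jacobi identity one has $\text{tr}\bigl([\mathfrak{a}\wedge\mathfrak{a}]\wedge[\mathfrak{a}\wedge\mathfrak{a}]\bigr)=0$ pointwise, i.e. $|[\mathfrak{a}\wedge\mathfrak{a}]^{-}|=|[\mathfrak{a}\wedge\mathfrak{a}]^{+}|$, so after absorption the quartic terms on the two sides are comparable with constants $\geq 1$, and whether anything survives depends entirely on numerical Weitzenb\"{o}ck constants that your proposal never computes. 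The same defect infects the pointwise inequality from which you extract the $L^{\infty}$ bound on $\mathfrak{a}_n$.

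The missing idea in Taubes' treatment is that the $F^{-}$ pairing is \emph{cancelled exactly}, not absorbed. Besides the integrated Weitzenb\"{o}ck identity, in which the curvature enters through the pairing $\langle F^{+},[\mathfrak{a}\wedge\mathfrak{a}]^{+}\rangle+\langle F^{-},[\mathfrak{a}\wedge\mathfrak{a}]^{-}\rangle$, one has the Stokes/Bianchi identity
\begin{equation*}
\int_{X}\text{tr}\bigl(d_{A}\mathfrak{a}\wedge d_{A}\mathfrak{a}\bigr)
=\int_{X}\text{tr}\bigl(\mathfrak{a}\wedge[F_{A}\wedge\mathfrak{a}]\bigr),
\end{equation*}
whose left side equals $\pm\|(d_{A}\mathfrak{a})^{+}\|_{L^2}^2$ because $(d_{A}\mathfrak{a})^{-}=0$ by \eqref{eq:KW1}, and whose right side carries the combination $\langle F^{+},[\mathfrak{a}\wedge\mathfrak{a}]^{+}\rangle-\langle F^{-},[\mathfrak{a}\wedge\mathfrak{a}]^{-}\rangle$, since the wedge pairing of two-forms is $\langle\alpha^{+},\beta^{+}\rangle-\langle\alpha^{-},\beta^{-}\rangle$ against the volume form. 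Taking the appropriate linear combination of the two identities eliminates the uncontrolled $F^{-}$ term identically; the equation \eqref{eq:KW2} then converts the surviving $\langle F^{+},[\mathfrak{a}\wedge\mathfrak{a}]^{+}\rangle$ into the good-signed $\|[\mathfrak{a}\wedge\mathfrak{a}]^{+}\|_{L^2}^2$, yielding the a priori estimate $\|\nabla_{A_n}\mathfrak{a}_n\|_{L^2}^{2}+\|[\mathfrak{a}_n\wedge\mathfrak{a}_n]^{+}\|_{L^2}^{2}\leq c\,r_n^{2}$ with no constant-chasing. From this $\|F_{A_n}\|_{L^2}\leq c(1+r_n^{2})$ follows via \eqref{eq:KW2} and the Pontrjagin identity, and the remainder of your argument goes through verbatim. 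One further correction of emphasis: these a priori estimates hold for \emph{all} solutions, bounded $r_n$ or not; what boundedness of $\{r_n\}$ buys is only that the resulting curvature bound stays uniform so that Uhlenbeck compactness applies to $\{A_n\}$ itself --- your closing paragraph has the right picture of why the unbounded case instead produces the singular set $Z$.
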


Analysis for the case that $\{ r_{n} \}$ has no bounded subsequence was the
bulk of \cite{T2}. 
Firstly, a subsequence of $\{ | a_n | \}$ converges in $L^2_1$, we denote the limit  suggestively by $|\hat{a}_{\diamond}|$. 
Moreover, Taubes proves the following.

\begin{proposition}[\cite{T2}] 
There exists a finite set $\Theta \subset X$ such that 
the function $| \hat{a}_{\diamond} |$ is continuous on $X \setminus \Theta$,  
and it is smooth at points in $X \setminus \Theta$ where $| \hat{a}_{\diamond}|$ is positive. 
Furthermore, the sequence $\{ | a_{n} | \}_{n \in \Lambda \subset \N}$
 converges to $| \hat{a}_{\diamond} |$ in the $C^{0}$-topology on
 compact subsets in $X \setminus \Theta$. 
\label{prop:C0conv}
\end{proposition}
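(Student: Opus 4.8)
The plan is to control the gauge-invariant function $w_n := |a_n|$ through a differential inequality extracted from a Weitzenb\"{o}ck formula, and to identify $\Theta$ as the locus where the curvatures $F_{A_n}$ concentrate energy. First I would derive the key inequality. Applying the Bochner--Weitzenb\"{o}ck formula to the one-form $a_n$, using the first-order equations $(d_{A_n} a_n)^- = 0$ and $d_{A_n}^* a_n = 0$, and pairing the result with $a_n$, one obtains a pointwise identity of the schematic form
\[
\tfrac{1}{2}\Delta |a_n|^2 + |\nabla_{A_n} a_n|^2 + r_n^2\,\langle [a_n \wedge a_n]^+ \cdot a_n,\, a_n\rangle = -\,\mathrm{Ric}(a_n, a_n) + (\text{l.o.t.}),
\]
where I have already substituted the second-order equation $F_{A_n}^+ = r_n^2 [a_n \wedge a_n]^+$ into the curvature term. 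The crucial algebraic point is that $\langle [a_n \wedge a_n]^+ \cdot a_n,\, a_n\rangle \geq 0$, so the dangerous factor $r_n^2$ enters with a favourable sign and may be discarded from the left-hand side. Combining this with Kato's inequality $|\nabla |a_n|| \leq |\nabla_{A_n} a_n|$ and the bounded geometry of the compact $X$ yields the uniform subsolution estimate $\Delta w_n \leq c\, w_n$, weakly on $X$, with $c$ independent of $n$.

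Next I would define $\Theta$ and establish its finiteness. The set $\Theta$ is the energy-concentration locus of the curvatures, namely the points $x$ at which $\int_{B(x,\rho)} |F_{A_n}|^2$ fails to fall below a fixed threshold for all small $\rho$ along a subsequence. Since the total curvature energy is bounded a priori by characteristic numbers of $P$, and since an $\epsilon$-regularity/monotonicity argument shows that any concentration carries a definite quantum of energy bounded below, only finitely many such points can occur; hence $\Theta$ is finite. Away from $\Theta$, Uhlenbeck's theorem provides local gauges in which the $A_n$ are bounded in $L^2_1$, giving curvature control uniform in $n$ on compact subsets of $X \setminus \Theta$.

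Then I would upgrade regularity and convergence off $\Theta$. On a compact set $K \subset X \setminus \Theta$, the inequality $\Delta w_n \leq c\, w_n$ together with the normalization $\|w_n\|_{L^2} = 1$ gives, via De Giorgi--Nash--Moser, a uniform $L^\infty(K)$ bound and a uniform $C^\alpha$ modulus of continuity for the $w_n$. Arzel\`{a}--Ascoli then extracts a subsequence converging in $C^0(K)$; since $w_n \to |\hat{a}_\diamond|$ already in $L^2_1$, the two limits agree, so $|\hat{a}_\diamond|$ is continuous on $X \setminus \Theta$ and the asserted $C^0$-convergence on compact subsets holds. Finally, at a point where $|\hat{a}_\diamond| > 0$ one has $w_n > 0$ on a neighbourhood for large $n$; there the subsolution inequality becomes an honest elliptic relation, the rescaled pairs converge in a local gauge to a limit satisfying the limiting equations with $[\hat{a}_\diamond \wedge \hat{a}_\diamond]^+ = 0$, and elliptic bootstrapping makes this limit smooth. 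As $|\hat{a}_\diamond|$ is then the norm of a nonvanishing smooth section, it is smooth where it is positive.

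The main obstacle I anticipate lies not in the differential inequality for $|a_n|$, which is robust and gauge-invariant, but in the uniform control of the gauge-dependent connections away from the concentration set: converting the subsolution estimate into genuine $C^0$ bounds requires uniform local $L^2_1$ control of the $A_n$, which is available only off $\Theta$ through Uhlenbeck compactness. Pinning down the correct $\epsilon$-regularity threshold and the energy quantization that forces $\Theta$ to be finite is the technically delicate heart of the argument.
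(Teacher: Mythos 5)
Your opening step is faithful to how Taubes actually begins in \cite{T2} (note the paper under review does not prove this proposition itself; it quotes it from \cite{T2}): the Bochner--Weitzenb\"{o}ck formula applied with both equations of \eqref{eq:rKW1}, the substitution $F_{A_n}^+ = r_n^2[a_n\wedge a_n]^+$ producing a term of favourable sign, Kato's inequality, and the scalar subsolution inequality $\Delta w_n \le c\,w_n$ are all correct and give the uniform bounds $\|\nabla_{A_n}a_n\|_{L^2}^2 + r_n^2\|[a_n\wedge a_n]^+\|_{L^2}^2 \le c$ and $\sup_X |a_n|\le c$. But your two pivotal steps after that fail. First, there is \emph{no} a priori $L^2$ curvature bound in this setting: from $F_{A_n}^+ = r_n^2[a_n\wedge a_n]^+$ and the Weitzenb\"{o}ck estimate one only controls $r_n^2\int_X|[a_n\wedge a_n]^+|^2$, so $\int_X|F_{A_n}^+|^2$ may grow like $r_n^2$, and the characteristic number controls only the difference $\int|F^+|^2-\int|F^-|^2$, so $\int|F^-|^2$ grows as well. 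Hence ``total curvature energy bounded a priori by characteristic numbers'' is false, the energy-quantization argument for finiteness of your $\Theta$ has no starting point, and Uhlenbeck's theorem does not provide uniform local $L^2_1$ gauges off $\Theta$ --- indeed in \cite{T2} the connections converge, only in $L^2_1$ and after gauge transformations, only off the \emph{larger} set $\Theta\cup Z$. This absence of curvature control is precisely the central difficulty of Taubes' paper, not a step importable from anti-self-dual theory. The quantity whose concentration defines $\Theta$, and whose total integral \emph{is} uniformly bounded by the Weitzenb\"{o}ck estimate, is $\int\bigl(|\nabla_{A_n}a_n|^2 + r_n^2|[a_n\wedge a_n]^+|^2\bigr)$; finiteness of $\Theta$ comes from that bound.

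Second, De Giorgi--Nash--Moser H\"{o}lder estimates hold for \emph{solutions}, not subsolutions: from $\Delta w_n \le c\,w_n$ and an $L^\infty$ bound one gets mean-value inequalities but no uniform modulus of continuity (bounded subharmonic functions can be discontinuous, and uniformly bounded families of subsolutions need not be equicontinuous), so your Arzel\`{a}--Ascoli step is unjustified. A telltale sign: your argument makes no essential use of $\Theta$ at this stage and would yield a uniform $C^\alpha$ bound on all of $X$, hence continuity of $|\hat{a}_{\diamond}|$ across $\Theta$ --- strictly stronger than the statement and not what Taubes proves; the bubbling set $\Theta$ exists exactly because equicontinuity of $\{|a_n|\}$ can be lost where $\int_{B_r}|\nabla_{A_n}a_n|^2$ concentrates. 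The real content of the $C^0$ convergence in \cite{T2} is a delicate local analysis (Green's function comparisons and monotonicity/frequency-type arguments for this energy on small balls) that degenerates precisely at the concentration points constituting $\Theta$. Your closing paragraph on smoothness where $|\hat{a}_{\diamond}|>0$ is the right idea in spirit, but as written it again presupposes local gauge control of the $A_n$, which cannot be obtained from curvature bounds that do not exist.
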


In the above proposition, the finite set $\Theta$  
corresponds to ``bubbling
points'' of the connections.  
We denote by $Z$ the zero locus of the limit function $| \hat{a}_{\diamond}|$. 
Then Taubes proves that 
a subsequence of $\{ (A_{n} , a_{n} )\}$ converges in $L^2_1$-topology outside
$\Theta \cup Z$. 
More precisely, Taubes proved the following.

\begin{theorem}[\cite{T2}]
There exist a real line bundle $\mathcal{I}$ over $X \setminus \{ \Theta \cup Z \}$, 
a section $\nu$ of $\mathcal{I} \otimes T^*X |_{X \setminus \{ \Theta \cup Z \}}$ with $d \nu = d^* \nu =0$ and $| \nu | = |\hat{a}_{\diamond}|$, 
a principal $G$-bundle $P_{\Delta}$ over $X \setminus \{ \Theta \cup Z \} $ 
and a connection $A_{\Delta}$ of $P_{\Delta}$ with $d_{A_{\Delta}} * F_{A_{\Delta}}=0$, 
an $A_{\Delta}$-covariantly constant homomorphism $\sigma_{\Delta} : \mathcal{I} \to \mathfrak{g}_{P_{\Delta}}$ 
and a sequence of isomorphisms $\{ g_n \}$ from $P_{\Delta}$ to $P|_{X \setminus \{ \Theta \cup Z \} } $ such that 
$\{ g_n (A_n), g_n (a_n) \}$ converges in $L^2_1$-topology outside $\Theta \cup Z$. 
Furthermore, $\{ g_n (a_n) \}$ converges in $C^0$-topology outside $\Theta$. 
\end{theorem}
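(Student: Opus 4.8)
The plan is to run the Taubes-style compactness argument for the rescaled pairs $(A_n,a_n)$ solving \eqref{eq:rKW1}--\eqref{eq:rKW2} and to read off the abelian limit $(A_\Delta,\nu,\sigma_\Delta)$ on the complement of the bubbling set $\Theta$ and the zero locus $Z$. Proposition \ref{prop:C0conv} is already available, so I take as given the finite set $\Theta$, the limit $|\hat a_\diamond|$, its continuity on $X\setminus\Theta$, and the $C^0$-convergence $|a_n|\to|\hat a_\diamond|$ there. What remains is (a) to extract the rank-one (abelian) structure of the limiting Higgs field, producing $\mathcal I$, $\nu$ and $\sigma_\Delta$; (b) to gauge-fix and pass to a limit connection $A_\Delta$; and (c) to identify the limiting equations $d\nu=d^*\nu=0$, $d_{A_\Delta}\sigma_\Delta=0$ and $d_{A_\Delta}*F_{A_\Delta}=0$.

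The geometric heart is the abelian degeneration. On $X\setminus\Theta$ the curvature energy is locally uniformly bounded, so from \eqref{eq:rKW2} the self-dual curvature $F_{A_n}^+=r_n^2[a_n\wedge a_n]^+$ stays $L^2$-controlled while $r_n\to\infty$; hence $[a_n\wedge a_n]^+\to0$. On the open set where $|\hat a_\diamond|>0$ this, together with $d_{A_n}^*a_n=0$ and $(d_{A_n}a_n)^-=0$, forces the $\mathfrak{g}_P$-valued one-form $a_n$ to collapse in the limit onto a single direction of the adjoint bundle: using that commuting elements of $\mathfrak{su}(2)$ are parallel, the limiting Higgs field factors pointwise as $\nu\otimes\sigma_\Delta$ with $\nu$ real-valued. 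The two equations in \eqref{eq:rKW1} then pass to $d\nu=d^*\nu=0$ and $d_{A_\Delta}\sigma_\Delta=0$, while the global $\pm$ ambiguity in the choice of the direction $\sigma_\Delta$ is exactly what is packaged into the real line bundle $\mathcal I$, so that $\nu$ is naturally a section of $\mathcal I\otimes T^*X$ with $|\nu|=|\hat a_\diamond|$.

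To produce the connection limit and the isomorphisms $\{g_n\}$ I would cover $X\setminus\{\Theta\cup Z\}$ by small balls on which the curvature energy lies below Uhlenbeck's threshold, place each $A_n$ in Coulomb gauge there with uniform $L^2_1$ bounds, and patch the local gauge transformations into a global $g_n$ from $P_\Delta$ to $P|_{X\setminus\{\Theta\cup Z\}}$. Elliptic bootstrapping then gives $L^2_1$-convergence $g_n(A_n)\to A_\Delta$, and passing to the limit in \eqref{eq:rKW2} using the abelian structure (so that $F_{A_\Delta}$ commutes with $\sigma_\Delta$) yields $d_{A_\Delta}*F_{A_\Delta}=0$; the $C^0$-convergence of $g_n(a_n)$ off $\Theta$ then follows from the continuity of $|\hat a_\diamond|$ together with the single fixed direction $\sigma_\Delta$.

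The main obstacle is the degeneration step: the unboundedness $r_n\to\infty$ flagged in the introduction rules out any naive compactness for $a_n$, and the delicate point is to show that $r_n^2[a_n\wedge a_n]^+\to0$ strongly enough to extract a genuinely $A_\Delta$-covariantly-constant direction $\sigma_\Delta$, while simultaneously ruling out loss of compactness anywhere except the finite set $\Theta$ and controlling the configuration uniformly up to---but not across---the zero locus $Z$.
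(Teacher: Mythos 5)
This statement is not proved in the paper at all: it is quoted from Taubes \cite{T2} as background in Section \ref{sec:rtaubes}, so the benchmark is Taubes' argument itself, and against that benchmark your proposal has a genuine gap at its central step. Your opening premise --- ``on $X \setminus \Theta$ the curvature energy is locally uniformly bounded'' --- is precisely what fails, and circumventing this failure is the bulk of \cite{T2}. The a priori integral identities for solutions of \eqref{eq:rKW1}--\eqref{eq:rKW2} give only $\int_X |\nabla_{A_n} a_n|^2 \leq c$ and $r_n^2 \int_X |[a_n \wedge a_n]|^2 \leq c$, so $\| F_{A_n}^+ \|_{L^2} = r_n^2 \| [a_n \wedge a_n]^+ \|_{L^2} = O(r_n) \to \infty$: there is no uniform $L^2$ curvature bound, globally or locally off $\Theta$, and curvature can concentrate along all of $Z$. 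That is exactly why $Z$ must be excised for connection convergence; if your sketch were correct as written, it would yield $L^2_1$ convergence of the connections off the finite set $\Theta$ alone, which is strictly stronger than the theorem and false. For the same reason $\Theta$ cannot be produced by the standard Uhlenbeck covering argument, which requires a uniform global energy bound, so even your starting data need a different justification. Note also that your inference is circular in direction: you use curvature control to extract the abelian limit, whereas in Taubes the logic runs the other way --- one first proves, on compact subsets of $\{ |\hat{a}_{\diamond}| > 0 \} \setminus \Theta$, pointwise decay (roughly exponential in $r_n$) of the component of $a_n$ orthogonal to a dominant direction in $\mathfrak{g}_P$, via Weitzenb\"{o}ck-type differential inequalities for that component together with delicate Green's function estimates; only this yields the local bound on $r_n^2 |[a_n \wedge a_n]|$, hence on $F^+_{A_n}$ (and on $F^-_{A_n}$ via the $p_1$ identity), after which Coulomb gauges, bootstrapping, and the covariantly constant $\sigma_{\Delta}$ follow along the lines you describe. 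The mere fact that $[a_n \wedge a_n]^+ \to 0$ in $L^2$, which you invoke, comes for free from the global identity and is far too weak for this.

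Your final paragraph honestly flags this as ``the delicate point'' but supplies no mechanism for it, so the proposal is a plan whose load-bearing step is absent rather than a proof. Two smaller points: ``commuting elements of $\mathfrak{su}(2)$ are parallel'' should read ``proportional'' (parallelism of $\sigma_{\Delta}$ is a conclusion, not an algebraic fact); and the existence of the \emph{global} real line bundle $\mathcal{I}$ on $X \setminus \{ \Theta \cup Z \}$ requires continuity up to sign of the limiting direction field on that whole set, which again rests on the decay estimates, not just on pointwise rank-one collapse.
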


As for the structure of the above $Z$, Taubes proved the following. 

\begin{proposition}[\cite{T3}]
$Z$ has the   Hausdorff dimension at most $2$. 
\end{proposition}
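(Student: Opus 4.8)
The plan is to exhibit $Z$ locally as the zero set of a harmonic $1$-form and then invoke the standard nodal-set theory, which on an $n$-manifold bounds such zero sets by Hausdorff dimension $n-2$; here $n=4$ gives the desired bound $2$. The relevant $1$-form is the $\nu$ produced in the preceding theorem: on $X\setminus\{\Theta\cup Z\}$ it is a section of $\mathcal{I}\otimes T^*X$ with $d\nu=d^*\nu=0$ and $|\nu|=|\hat a_\diamond|$, and by Proposition \ref{prop:C0conv} the function $f:=|\hat a_\diamond|$ is continuous on $X\setminus\Theta$ with $Z=f^{-1}(0)$. Since Hausdorff dimension is a local and bi-Lipschitz-invariant notion, I may work in a small ball, trivialise the real line bundle $\mathcal{I}$, and treat $\nu$ as an ordinary closed, coclosed $1$-form; then $\nu=df$ for a local harmonic function $f$, so that $Z$ is, up to the finite set $\Theta$, the \emph{critical set} $\{\nabla f=0\}$ of a harmonic function.

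First I would record the Weitzenb\"ock identity. Because $d\nu=d^*\nu=0$, the Hodge Laplacian of $\nu$ vanishes, so $\nabla^*\nabla\nu=-\,\mathrm{Ric}(\nu)$; this is a generalised Laplace equation whose zeroth-order coefficient is bounded in terms of the geometry of $X$. Combined with the Kato inequality $|\nabla|\nu||\le|\nabla\nu|$ this yields a differential inequality of the form $\big|\Delta|\nu|^2\big|\le C|\nu|^2+C|\nabla\nu|^2$ controlling $f^2=|\nu|^2$; crucially, since $f\ge 0$ is continuous across $Z$, this inequality persists weakly on all of $X\setminus\Theta$, so that the machinery below applies without needing $\nu$ itself to extend over $Z$.

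The core analytic step is an Almgren-type frequency function for the equation $\nabla^*\nabla\nu=-\mathrm{Ric}(\nu)$, in the form developed by Garofalo and Lin for variable coefficients. Monotonicity of the frequency gives unique continuation and, at every $x_0\in Z$, a finite vanishing order together with a nonzero homogeneous tangent form $\nu_0$ on $\R^4$ obtained by blow-up; $\nu_0$ is itself closed and coclosed, i.e.\ a homogeneous harmonic $1$-form with polynomial coefficients. A Federer dimension-reduction argument then reduces the claim to the statement that the zero set of such a nonzero $\nu_0$ has dimension at most $2$, the base case being that $\nu_0$ cannot vanish on an open set or on a hypersurface. The latter follows from the overdetermined first-order constraints $d\nu_0=0$, $d^*\nu_0=0$: a closed and coclosed form vanishing across a hypersurface has infinite vanishing order there and hence vanishes identically by unique continuation.

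The main obstacle I expect is precisely this improvement from the generic codimension-one bound for nodal sets of a \emph{scalar} elliptic equation to the codimension-two bound needed here. For a single harmonic function the nodal set $\{f=0\}$ is genuinely a hypersurface, so the gain relies essentially on $Z$ being a critical set $\{\nabla f=0\}$, equivalently on $\nu$ being a $1$-form subject to $d\nu=d^*\nu=0$ rather than a scalar. Turning this overdeterminacy into the sharp dimension bound through the frequency function and dimension reduction, while controlling the behaviour near the bubbling points $\Theta$ and across the a priori incomplete domain of definition of $\nu$, is the technical heart of the argument.
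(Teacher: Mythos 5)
First, note that the paper itself gives no proof of this proposition: it is quoted verbatim from Taubes [T3], whose subject is exactly the zero loci of $\Z/2$ \emph{harmonic spinors}. Your overall strategy --- frequency function, blow-up to homogeneous tangents, Federer dimension reduction --- is indeed the family of techniques used in that source, so your instincts about the shape of the argument are sound. But your sketch contains a genuine error at its very first step, and it is not a removable technicality: you propose to ``work in a small ball, trivialise the real line bundle $\mathcal{I}$, and treat $\nu$ as an ordinary closed, coclosed $1$-form,'' reducing $Z$ to the critical set of a single-valued harmonic function. This is impossible at points of $Z$. The bundle $\mathcal{I}$ is defined only on $X\setminus\{\Theta\cup Z\}$, and in general it has monodromy $-1$ on small loops linking $Z$; this two-valuedness is precisely what makes $\nu$ a $\Z/2$ harmonic form rather than an honest one, and it is the entire source of difficulty in [T3]. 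The local model is $\nu=\mathrm{Re}\,(z^{1/2}\,dz)$ on $\C\times\C$, with zero locus $\{z=0\}$ of codimension two: there is no trivialisation of $\mathcal{I}$ on a punctured neighbourhood of a point of $Z$, no local primitive $f$ with $\nu=df$ across $Z$, and indeed no way to reduce to the Han--Hardt--Lin theory of critical sets of harmonic functions. If $\mathcal{I}$ did extend over $Z$, the problem would largely trivialise by a removable-singularity argument, which is exactly what cannot be assumed.

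A second, related gap is your assertion that the differential inequality for $|\nu|^2$ ``persists weakly on all of $X\setminus\Theta$.'' In the model above $|\nabla\nu|\sim \mathrm{dist}(\cdot,Z)^{-1/2}$, so $\nu$ is not $C^1$ up to $Z$; $\nabla\nu$ is only locally $L^2$. Extending a differential inequality weakly across a singular set of this kind requires knowing the set has zero capacity --- i.e.\ essentially the smallness of $Z$ that the proposition is meant to establish --- so as stated your argument is circular at this point. What Taubes actually does is build the Almgren-type frequency function directly for the two-valued object, using only the a priori package supplied by the compactness theorem of [T2]: $|\nu|$ continuous across $Z$ with $Z=|\nu|^{-1}(0)$, $\nu$ harmonic where it is defined, and local $L^2$ bounds on $\nabla\nu$. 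The frequency is then shown to be monotone and to control vanishing order for $\Z/2$ harmonic sections, after which blow-up and dimension reduction give the Hausdorff dimension bound; note also that the base case must admit half-integer homogeneities such as $|z|^{1/2}$, which your unique-continuation-across-a-hypersurface argument for single-valued forms does not see. So: right analytic machinery, but the reduction to the single-valued (critical-set) problem is where the proof genuinely breaks, and repairing it requires the full $\Z/2$ formalism of [T3].
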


In Section \ref{sec:stsing}, we prove that the set $Z$ has a structure of an analytic set of $X$ when the underlying manifold is a K\"{a}hler surface.

\subsection{The equations on compact K\"{a}hler surfaces}
\label{sec:eqK}

From here, we take $X$ to be a compact K\"{a}hler surface with K\"{a}hler form $\omega$, and $E$ to be a Hermitian
vector bundle with Hermitian metric $h$ on $X$.  
We assume that $c_1 (E) =0$. 
We denote by $\mathcal{A}_{(E,h)}$ the space of all connections on $E$ which preserve the metric $h$, 
and by $\mathfrak{u} (E) = \text{End}(E,h)$ the bundle of skew-Hermitian endomorphisms 
of $E$.

In these setting,  we have $d_{A} = \partial_{A} +
 \bar{\partial}_{A}$, 
$d_{A}^{*} = \partial_{A}^{*} + 
 \bar{\partial}_{A}^{*}$ and
$\mathfrak{a} 
= \phi - \phi^{*}$, where $\phi \in \Gamma (X, \mathfrak{u} (E) \otimes \Omega^{1}_{X} ) 
= \Omega^{1 ,0} ( \mathfrak{u} (E))$ with $\Omega^{1}_{X}$ being the holomorphic cotangent bundle of $X$. 
In addition, the space of complexified two-forms is decomposed as 
 $\Omega^{2} (X) \otimes \C 
= \Omega^{2,0} (X) \oplus \Omega^{1,1} (X) \oplus \Omega^{0,2} (X)$, 
and we have the following identifications.  
\begin{gather*} 
\Omega^{+} (X) \cong \Omega^2 (X) \cap 
\left( \Omega^{2,0}(X) \oplus \Omega^{0,2} (X) \oplus \Omega^0 (X)
 \omega \right) , \\
\quad \Omega^{-} (X) \cong \Omega^2 (X) \cap ( \Omega_{0}^{1,1}
 (X)), 
\end{gather*}
where $\Omega^{1,1}_{0} (X)$ denotes the orthogonal subspace in
$\Omega^{1,1} (X)$ to $\Omega^{0} (X)\omega$. 
Thus, the equation \eqref{eq:KW2} has the following form on a compact K\"{a}hler surface. 
\begin{equation}
F_{A}^{0,2} - [ \phi^{*} \wedge \phi^{*} ] = 0, \quad 
i \Lambda (F_{A}^{1,1} +2  [ \phi \wedge \phi^{*}] ) =0, 
\label{KWk1}
\end{equation}
where $\Lambda$ denotes the adjoint of
$\wedge \omega$

Furthermore, we have the following. 
\begin{proposition}
On a compact K\"{a}hler surface, the equations \eqref{eq:KW1} and \eqref{eq:KW2} have the following form that asks $(A, \phi) \in \mathcal{A}_{(E,h)} \times \Omega^{1,0 } 
(\mathfrak{u} (E))$ to satisfy 
\begin{gather}
\bar{\partial}_{A} \phi = 0 
 , \quad [ \phi \wedge \phi ] =0, 
\label{KWcK1}\\
F_{A}^{0,2} = 0, \quad \Lambda \left( F_{A}^{1,1} + 2 [ \phi  \wedge \phi^{*}] \right) =0 . 
\label{KWcK2} 
\end{gather}
\label{prop:Simp}
\end{proposition}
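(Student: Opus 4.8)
The plan is to show that the Kapustin--Witten system \eqref{eq:KW1}--\eqref{eq:KW2} is equivalent to \eqref{KWcK1}--\eqref{KWcK2}, the forward implication being immediate and the reverse one carrying the analytic content. First I would record the type decomposition of the curvature equation \eqref{eq:KW2}, which has already been carried out in \eqref{KWk1}: its self-dual part splits, according to the identification of $\Omega^{\pm}(X)$ given above, into the $(0,2)$-component $F_A^{0,2}=[\phi^*\wedge\phi^*]$ (together with its Hermitian conjugate $(2,0)$-component $F_A^{2,0}=[\phi\wedge\phi]$) and the trace component $\Lambda(F_A^{1,1}+2[\phi\wedge\phi^*])=0$. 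The last of these is already the second equation of \eqref{KWcK2}, so nothing remains to be shown there. Next I would rewrite \eqref{eq:KW1} using $\mathfrak{a}=\phi-\phi^*$: since $\phi$ is of type $(1,0)$ and $\phi^*$ of type $(0,1)$, the only contribution to $(d_A\mathfrak{a})^{-}$, the primitive $(1,1)$-part, comes from $\bar{\partial}_A\phi-\partial_A\phi^*$, while the Kähler identities (in their bundle-valued form) turn $d_A^*\mathfrak{a}=0$ into the trace relation $\Lambda(\bar{\partial}_A\phi+\partial_A\phi^*)=0$. Thus \eqref{eq:KW1} is equivalent to the two relations $(\bar{\partial}_A\phi-\partial_A\phi^*)_0=0$ and $\Lambda(\bar{\partial}_A\phi+\partial_A\phi^*)=0$, where $(\cdot)_0$ denotes the primitive part. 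After these reductions the whole statement comes down to proving the three vanishings $F_A^{0,2}=0$, $\bar{\partial}_A\phi=0$ and $[\phi\wedge\phi]=0$.

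These do not follow pointwise from the relations above, and the heart of the argument is a global integration by parts on the compact Kähler surface $X$. Here I would compute the $L^2$-norm $\|\bar{\partial}_A\phi\|^2$ by splitting $\bar{\partial}_A\phi$ into its primitive and trace parts, using $(\bar{\partial}_A\phi)_0=(\partial_A\phi^*)_0$ to replace one factor, and then integrating by parts. The Kähler identity relating $\bar{\partial}_A^*$ to $[\Lambda,\partial_A]$, together with the curvature relation $\partial_A\partial_A\phi^*=[F_A^{2,0},\phi^*]$, brings in $F_A^{2,0}$; at this point I substitute $F_A^{2,0}=[\phi\wedge\phi]$ from \eqref{KWk1}, so that the term becomes algebraic and reassembles, via the adjunction $\langle\,\cdot\wedge\omega,\,\cdot\,\rangle=\langle\,\cdot\,,\Lambda\,\cdot\,\rangle$, into a positive multiple of $\|[\phi\wedge\phi]\|^2$. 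The remaining boundary-free terms collapse, through the Kähler identity expressing $\partial_A^*\phi$ as a multiple of $\Lambda\bar{\partial}_A\phi$, into a multiple of $\|\Lambda\bar{\partial}_A\phi\|^2$. The upshot is an identity exhibiting a sum of the squared norms of $(\bar{\partial}_A\phi)_0$, $\Lambda\bar{\partial}_A\phi$ and $[\phi\wedge\phi]$ as zero; since each summand is non-negative, each vanishes. This gives $\bar{\partial}_A\phi=0$ and $[\phi\wedge\phi]=0$ at once, and then $F_A^{2,0}=[\phi\wedge\phi]=0$ forces $F_A^{0,2}=0$ by Hermitian conjugation. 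Throughout I would use the hypothesis $c_1(E)=0$ to place everything in the traceless ($SU(2)$) setting, so that $\phi$ and $F_A$ are trace-free and no central component obstructs these cancellations.

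Assembling the two steps recovers exactly \eqref{KWcK1}--\eqref{KWcK2}, and since the converse is a routine type-check (if $\bar{\partial}_A\phi=0$, $[\phi\wedge\phi]=0$ and $F_A^{0,2}=0$ hold, then so do their conjugates, and \eqref{eq:KW1}--\eqref{eq:KW2} follow pointwise), the two systems are genuinely equivalent. I expect the main obstacle to be the bookkeeping in the integration by parts: getting the signs, the factors of $i$, and the normalisation in the primitive/trace splitting exactly right so that the final expression is a manifestly definite sum of squares rather than an indefinite combination. Keeping careful track of the Kähler identities in their bundle-valued form, and of the above adjunction between $\Lambda$ and $\wedge\,\omega$, is what makes the cancellations transparent.
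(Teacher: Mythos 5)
Your proposal is correct, and it uses the same toolkit as the paper's proof of Proposition \ref{prop:Simp} --- the type decomposition of \eqref{eq:KW2} into \eqref{KWk1}, the reduction of \eqref{eq:KW1} to the primitive and trace relations \eqref{eq2}, the K\"ahler identities, and a global integration by parts that feeds in $F_A^{2,0}=[\phi\wedge\phi]$ --- but you consolidate the paper's two-stage analytic core into a single Bochner-type identity. The paper first (Step 2) applies $\partial_A$ and $\bar{\partial}_A$ to the anti-self-duality relation \eqref{eq:asd2} and pairs with $\phi$ and $\phi^*$; at that stage the curvature terms $\partial_A\partial_A\phi^*$ and $\bar{\partial}_A\bar{\partial}_A\phi$ cancel identically against their trace parts (via $(\Lambda\alpha)\wedge\omega=\alpha$ on $3$-forms), yielding the exact intermediate equation $\bar{\partial}_A\phi=\partial_A\phi^*$ together with $\Lambda\bar{\partial}_A\phi=\Lambda\partial_A\phi^*=0$; only then (Step 3) does a second pairing bring in $F_A^{2,0}=[\phi\wedge\phi]$ to give $\|\bar{\partial}_A\phi\|_{L^2}^2+\|[\phi\wedge\phi]\|_{L^2}^2=0$. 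You never establish the intermediate equation and do not need it: with $(\bar{\partial}_A\phi)_0=(\partial_A\phi^*)_0$, $\Lambda\partial_A\phi^*=-\Lambda\bar{\partial}_A\phi$, and $\partial_A^*\phi=i\Lambda\bar{\partial}_A\phi$ from the K\"ahler identity, one finds (with the standard normalisation $|\omega|^2=2$)
\begin{equation*}
\langle\bar{\partial}_A\phi,\partial_A\phi^*\rangle_{L^2}
=\|(\bar{\partial}_A\phi)_0\|_{L^2}^2-\tfrac12\|\Lambda\bar{\partial}_A\phi\|_{L^2}^2
\qquad\text{and}\qquad
\langle\bar{\partial}_A\phi,\partial_A\phi^*\rangle_{L^2}
=-\|\Lambda\bar{\partial}_A\phi\|_{L^2}^2-\|[\phi\wedge\phi]\|_{L^2}^2,
\end{equation*}
whence $\|(\bar{\partial}_A\phi)_0\|_{L^2}^2+\tfrac12\|\Lambda\bar{\partial}_A\phi\|_{L^2}^2+\|[\phi\wedge\phi]\|_{L^2}^2=0$. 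So the sign worry you flag does resolve favourably: the negative trace term produced by the K\"ahler identity strictly dominates the $-\tfrac12\|\Lambda\bar{\partial}_A\phi\|_{L^2}^2$ coming from the primitive/trace splitting, and the combination is a genuinely definite sum of squares, from which $\bar{\partial}_A\phi=0$, $[\phi\wedge\phi]=0$ and then $F_A^{0,2}=0$ follow exactly as you say. Your route is shorter (one integration by parts instead of the paper's three pairings), while the paper's version isolates a curvature-free step and records the structural facts $\bar{\partial}_A\phi=\partial_A\phi^*$ and $\Lambda\partial_A\phi^*=0$, which it then reuses to simplify the bookkeeping in its Step 3. One small correction: trace-freeness plays no role here --- the argument works for $\phi\in\Omega^{1,0}(\mathfrak{u}(E))$ with $E$ of arbitrary rank, and the paper's proof invokes neither $c_1(E)=0$ nor an $SU(2)$ reduction; the rank-two hypothesis enters only later, in the analysis of the singular set in Section \ref{sec:stsing}.
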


\vspace{-0.8cm}
Namely, the Kapustin--Witten equations and Simopson's equations in \cite{Si} are the same on a compact K\"{a}hler surface.  
Note that the above statement for the flat bundle case was proved by Simpson \cite{Si2} 
(in arbitrary dimensions).

\vspace{0.3cm}
\hspace{-0.8cm}
{\it Proof of Propositoin \ref{prop:Simp}
\footnote{The proof presented here was partly inspired by some proficient calculations by Clifford Taubes \cite{Tp}.}.} 
The proof consists of the following three steps.

\vspace{0.2cm}
\hspace{-0.6cm}\underline{Step 1}:  
As  $d_{A}^{*} = \partial_{A}^{*} + 
 \bar{\partial}_{A}^{*}$ and
$\mathfrak{a} 
= \phi -  \phi^{*}$ with $\phi \in \Omega^{1 ,0} ( \mathfrak{u} (E))$, 
from the first equation $d_{A}^{*} \mathfrak{a} = 0$ in \eqref{eq:KW1}, we have
$ \partial_{A}^{*} \phi - \bar{\partial}_{A}^{*} \phi^{*} =0 $.  
From this with the K\"{a}hler identities: $i \partial_{A}^{*} = [ \bar{\partial}_{A}
, \Lambda]$ and $i \bar{\partial}_{A}^{*} = - [ \partial_{A}
, \Lambda]$, we obtain 
$i \Lambda ( \bar{\partial}_{A} \phi + \partial_{A} \phi^{*} ) =0$. 
Hence, the equations in \eqref{eq:KW1} take the following form when $X$ is a 
compact K\"{a}hler surface. 
\begin{equation}
i \Lambda ( \bar{\partial}_{A} \phi + \partial_{A} \phi^{*} ) =0, \quad 
( \bar{\partial}_{A} \phi -  \partial_{A} \phi^{*} )^{-} = 0.
\label{eq2} 
\end{equation}

\vspace{0.2cm}
\hspace{-0.6cm}\underline{Step 2}:  
We next prove that 
\begin{equation}
\bar{\partial}_{A} \phi -  \partial_{A} \phi^{*} = 0,  \quad \Lambda \bar{\partial}_{A} \phi 
= \Lambda \partial_{A} \phi^{*} =0 .
\label{eq:step2}
\end{equation}

\vspace{0.1cm}
To prove \eqref{eq:step2}, we write the second equation in \eqref{eq2} in the following form.  
\begin{equation}
(\bar{\partial}_{A} \phi -  \partial_{A} \phi^{*} ) -  (\Lambda \bar{\partial}_{A} \phi ) \wedge \omega 
 + ( \Lambda \partial_{A} \phi^{*} ) \wedge \omega =0 
\label{eq:asd2}
\end{equation}
Acting on the left hand side of this by $\partial_{A}$, we get the following. 
\begin{equation*}
\begin{split}
0 &= \partial_{A} \left( 
(\bar{\partial}_{A} \phi -  \partial_{A} \phi^{*} ) -   (\Lambda \bar{\partial}_{A} \phi ) \wedge \omega 
 +  ( \Lambda \partial_{A} \phi^{*} ) \wedge \omega 
\right)  \\
& =  \partial_{A} \bar{\partial}_{A} \phi  - \partial_{A} \partial_{A} \phi^{*}  
 - \partial_{A} (\Lambda \bar{\partial}_{A} \phi ) \wedge \omega 
 + \partial_{A} ( \Lambda \partial_{A} \phi^{*} ) \wedge \omega  \\
& =  \partial_{A} \bar{\partial}_{A} \phi  - \partial_{A} \partial_{A} \phi^{*}  
 -( \Lambda \partial_{A} \bar{\partial}_{A} \phi ) \wedge \omega  \\ 
& \qquad + i ( \bar{\partial}_{A}^{*}  \bar{\partial}_{A} \phi ) \wedge \omega  
+ ( \Lambda \partial_{A} \partial_{A} \phi^{*} ) \wedge \omega  
     - i  ( \bar{\partial}_{A}^{*}  \partial_{A} \phi^{*} ) \wedge \omega  \\
& =  i ( \bar{\partial}_{A}^{*}  \bar{\partial}_{A} \phi ) \wedge \omega  
 - i  ( \bar{\partial}_{A}^{*}  \partial_{A} \phi^{*} ) \wedge \omega  . 
\end{split}
\end{equation*}
Here we used the K\"{a}hler identities at the third equality, and the fact that the multiplication by $\omega$ is an isometry from 1-forms to 3-forms and the $\Lambda$ is the adjoint of it at the last equality above. 
Then the $L^2$ inner product of this with $\phi$ yields 
\begin{equation}
i  \langle  \phi,   \bar{\partial}_{A}^{*}  \bar{\partial}_{A} \phi     \rangle_{L^2}
- i \langle \phi,   \bar{\partial}_{A}^{*}  \partial_{A} \phi^{*}  \rangle_{L^2} =0 . 
\label{eq:l21}
\end{equation}
Similarly, acting on the left hand side of \eqref{eq:asd2} by $\bar{\partial}_{A}$, we obtain 
\begin{equation*}
\begin{split}
0 &= \bar{\partial}_{A}  \left( 
(\bar{\partial}_{A} \phi -  \partial_{A} \phi^{*} ) -  (\Lambda \bar{\partial}_{A} \phi ) \wedge \omega 
 + ( \Lambda \partial_{A} \phi^{*} ) \wedge\omega 
\right)  \\
& =  \bar{\partial}_{A} \bar{\partial}_{A} \phi  - \bar{\partial}_{A} \partial_{A} \phi^{*}  
 - \bar{\partial}_{A} (\Lambda \bar{\partial}_{A} \phi ) \wedge \omega 
 + \bar{\partial}_{A} ( \Lambda \partial_{A} \phi^{*} ) \wedge \omega  \\
& =  \bar{\partial}_{A} \bar{\partial}_{A} \phi  - \bar{\partial}_{A} \partial_{A} \phi^{*}  
 -( \Lambda \bar{\partial}_{A} \bar{\partial}_{A} \phi ) \wedge \omega  \\ 
& \qquad -  i ( \partial_{A}^{*}  \bar{\partial}_{A} \phi ) \wedge \omega  
+ ( \Lambda \bar{\partial}_{A} \partial_{A} \phi^{*} ) \wedge\omega  
     +  i  ( \partial_{A}^{*}  \partial_{A} \phi^{*} ) \wedge \omega   \\
& = -  i ( \partial_{A}^{*}  \bar{\partial}_{A} \phi ) \wedge \omega  
  +  i  ( \partial_{A}^{*}  \partial_{A} \phi^{*} ) \wedge \omega 
\end{split}
\end{equation*}
Then from the $L^2$ inner product of this with $\phi^*$ we get  
\begin{equation}
- i \langle \phi^*,   \partial_{A}^{*}  \bar{\partial}_{A} \phi \rangle_{L^2}  
 + i  \langle  \phi^*,   \partial_{A}^{*}  \partial_{A} \phi^{*}   \rangle_{L^2} =0 . 
\label{eq:l22}
\end{equation}
Then adding \eqref{eq:l21} and \eqref{eq:l22}, we obtain  
\begin{equation*}
||  \bar{\partial}_{A} \phi - \partial_{A} \phi^{*} ||^{2}_{L^2} =0.  
\end{equation*}
Thus $\bar{\partial}_{A} \phi - \partial_{A} \phi^{*} =0$. 
Substituting this to the first term of \eqref{eq:asd2}, we get  
$ (\Lambda \bar{\partial}_{A} \phi ) \omega 
  - ( \Lambda \partial_{A} \phi^{*} ) \omega = 0$. 
Thus from this and  the first equation in \eqref{eq2}, we obtain $ \Lambda \bar{\partial}_{A} \phi  
  = \Lambda \partial_{A} \phi^{*}  = 0$.

\vspace{0.2cm}
\hspace{-0.6cm}\underline{Step 3}:  
We now prove that $\bar{\partial}_{A} \phi =0$ and $[ \phi \wedge \phi ] = 0$;  and consequently $F_{A}^{0,2}=0$.

Acting on the first equation in \eqref{eq:step2} by $\bar{\partial}_{A}^*$, we get 
$$ \bar{\partial}_{A}^* \bar{\partial}_{A} \phi  - \bar{\partial}_{A}^* \partial_{A} \phi^{*} = 0
$$ 
Taking the $L^2$ inner product of this with $\phi$, we obtain 
\begin{equation}
\langle \phi , \bar{\partial}_{A}^* \bar{\partial}_{A} \phi  \rangle_{L^2}  
-  \langle \phi, \bar{\partial}_{A}^* \partial_{A} \phi^{*}  \rangle_{L^2} = 0. 
\label{eq:step31}
\end{equation}
Here, the second term of the above can be computed as follows. 
\begin{equation*}
\begin{split}
\langle \phi, \bar{\partial}_{A}^* \partial_{A} \phi^{*}  \rangle_{L^2}  
&= i \langle \phi,  \partial_{A} \Lambda  \partial_{A} \phi^{*}  \rangle_{L^2}  
 - i \langle \phi,  \Lambda \partial_{A} \partial_{A} \phi^{*}  \rangle_{L^2}   \\
&= - i \langle \phi,  \Lambda \partial_{A} \partial_{A} \phi^{*}  \rangle_{L^2}   \\ 
&= - i \langle \omega \wedge  \phi,  [F_{A}^{2,0} \wedge \phi^{*} ]  \rangle_{L^2}   \\
&= - i \langle \omega \wedge \phi,  [[\phi \wedge\phi] \wedge \phi^{*} ]  \rangle_{L^2}   \\
&= - \int_{X} \text{tr}  ( [[\phi \wedge\phi] \wedge \phi^{*} ]  \wedge \phi^{*} ) \\
&= - \int_{X} \text{tr}  ( [\phi \wedge\phi] \wedge [\phi^{*} \wedge \phi^{*} ] ) \\
&= - || [ \phi \wedge \phi ]  ||_{L^2}^{2} . 
\end{split}
\end{equation*}
In the above, we used the K\"{a}hler identity at the first equality; the fact that $\Lambda \partial_{A} \phi^* =0$ from \eqref{eq:asd2} at the second equality; and the conjugate of the first equation in \eqref{KWk1} at the forth equality. 
Thus \eqref{eq:step31} becomes 
$$ || \bar{\partial}_{A} \phi ||_{L^2}^{2} + || [ \phi \wedge \phi ] ||_{L^2}^{2} = 0. 
$$
Hence the assertion holds. 
\qed

\begin{remark}
Nakajima \cite[\S6(iii)]{N} obtained the complex form of the equations described above in an elegant way even including the Vafa--Witten equations case. 

\end{remark}

\subsection{The structure of singular sets in the K\"{a}hler case}
\label{sec:stsing}
 
From here, we assume the rank of $E$ to be two. 
Let $\{ (A_n , \phi_n) \}_{n \in \N}$ be a sequence of solutions to the
equations \eqref{KWcK1} and \eqref{KWcK2}, and put $r_{n} := ||
\phi_{n} ||_{L^2}$ and $\varphi_{n} := \phi_{n} / r_n$ for each $n \in
\N$. 
Then the analysis by Taubes described in Section \ref{sec:rtaubes}  
to this article holds for solutions to
the equations \eqref{KWcK1} and \eqref{KWcK2}.  
In particular, if we assume that $\{ r_n \}$ has no bounded subsequence, 
then there exist 
a finite set of points in $X$ to be denoted by $\Theta$, 
a closed nowhere dense subset $Z$, 
a real line bundle $\mathcal{J}$ on $X \setminus \{ \Theta \cup Z \}$, 
a section $\mu$ of the bundle $\mathcal{J} \otimes \Omega^{1}_{X}$, 
a Hermitian vector bundle $E_{\Delta}$ with hermitian metric $h_{\Delta}$ on $X \setminus \{  \Theta \cup Z \}$, 
a connection $A_{\Delta}$ of $E_{\Delta}$, 
and a isometric bundle homomorphism $\tau_{\Delta} : \mathcal{J} \to 
\mathfrak{u} (E_{\Delta})$. 
In addition, there exist a subsequence $\Xi \in \N$ and a sequence of isomorphism $\{ g_{i} \}_{i \in \Xi}$ from $E_{\Delta}$ 
to $E|_{X \setminus \{ \Theta \cup Z\}} $ such that 
$\{ g_{i}^* \varphi_{i} \} $ converges to $\tau_{\Delta} \circ \mu$ in the $L^2_1$ topology on compact subsets in $X \setminus \{ \Theta \cup Z \} $; and in the $C^{0}$ topology on $X \setminus \Theta$, and 
$\{ g_{i} ^* A_{i} \} $ converges on compact subsets of $X \setminus \{ \Theta \cup Z \}$ to $A_{\Delta}$ in the $L^2_1$ topology.  Here $Z$ is the zero set of the $L^2_1$ limit $|\hat{\varphi}_{\diamond}|$ of a
subsequence $\{ |\varphi_{i}| \}$, which is obtained in the same way to 
$|\hat{a}_{\diamond}|$ in Section \ref{sec:rtaubes}.

As mentioned at the beginning of this section, the above singular set $Z$ has the  structure of an analytic 
subvariety in $X$ in this case. 
To see that,  we consider\footnote{The idea of taking this section was pointed out to the author by Clifford Taubes, 
also the proof of Proposition \ref{prop:ZT} below uses his ideas throughout. }  
a section $\text{tr} ( \varphi_i \otimes \varphi_i)$
of  $( \Omega_{X}^{1})^{\otimes 2} $ for each $\varphi_{i} \in \Gamma (\mathfrak{u} (E) \otimes \Omega_{X}^{1})$.  
Note that the connection on the bundle does not do anything to this as we take the trace. 
We then have the following.

\begin{proposition}
Assume that $\{ r_n \}$ has no bounded subsequence. 
Then there exists a subsequence $\Lambda \subset \N$ such that $\{ \text{\rm tr} ( \varphi_i \otimes \varphi_i)
 \}_{i \in \Lambda}$ converges in $C^{\infty}$-topology to a holomorphic section, which we denote
 by $\text{\rm tr} ( \hat{\varphi}_{\diamond} \otimes \hat{\varphi}_{\diamond})$, of the holomorphic bundle 
 $ ( \Omega_{X}^1 )^{\otimes 2}$. 
\label{prop:det}
\end{proposition}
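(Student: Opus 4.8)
The plan is to exploit the fact that, although each $\varphi_i$ may degenerate badly along $Z$, the scalar section $\text{tr}(\varphi_i \otimes \varphi_i)$ is a \emph{globally defined, globally holomorphic} section of the \emph{fixed} holomorphic bundle $(\Omega^1_X)^{\otimes 2}$, and that the space of such sections is finite dimensional. In this way the desired compactness reduces to linear algebra rather than hard analysis, and one never has to excise the singular set.

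First I would establish holomorphicity. Since $(A_i,\phi_i)$ solves \eqref{KWcK1}, we have $\bar{\partial}_{A_i}\phi_i=0$, and dividing by the constant $r_i$ gives $\bar{\partial}_{A_i}\varphi_i=0$; thus $\varphi_i$ is holomorphic for the holomorphic structure on $\text{End}(E)\otimes\Omega^1_X$ determined by $A_i$ on $E$ together with the fixed one on $\Omega^1_X$. The contraction $\text{tr}\colon \text{End}(E)\otimes\text{End}(E)\to\mathcal{O}_X$ is parallel for the connection induced by $A_i$, since $d\,\text{tr}(ST)=\text{tr}\bigl((d_{A_i}S)T\bigr)+\text{tr}\bigl(S\,d_{A_i}T\bigr)$, and hence it intertwines the Dolbeault operators. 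Applying it to the holomorphic section $\varphi_i\otimes\varphi_i$ and using $\bar{\partial}_{A_i}\varphi_i=0$ yields $\bar{\partial}\bigl(\text{tr}(\varphi_i\otimes\varphi_i)\bigr)=\text{tr}(\bar{\partial}_{A_i}\varphi_i\otimes\varphi_i)-\text{tr}(\varphi_i\otimes\bar{\partial}_{A_i}\varphi_i)=0$. The crucial point is that taking the trace cancels the $A_i$-dependence, so the outcome is holomorphic for the single fixed holomorphic structure on $(\Omega^1_X)^{\otimes 2}$, the same for every $i$.

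Next I would produce a uniform bound. By construction $\|\varphi_i\|_{L^2}=1$, and pointwise $\lvert\text{tr}(\varphi_i\otimes\varphi_i)\rvert\le c\,\lvert\varphi_i\rvert^2$ for a constant $c$ depending only on the rank, so $\|\text{tr}(\varphi_i\otimes\varphi_i)\|_{L^1}\le c\,\|\varphi_i\|_{L^2}^2=c$ uniformly in $i$. Because $X$ is a compact complex manifold and $(\Omega^1_X)^{\otimes 2}$ is a holomorphic vector bundle, the space $H^0\bigl(X,(\Omega^1_X)^{\otimes 2}\bigr)$ is finite dimensional, and on it all norms---in particular the $L^1$ norm and any $C^\infty$ norm---are equivalent. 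The sequence therefore lies in a bounded subset of this finite-dimensional space, and the Bolzano--Weierstrass theorem furnishes a subsequence $\Lambda\subset\N$ converging smoothly to some element of $H^0\bigl(X,(\Omega^1_X)^{\otimes 2}\bigr)$; this limit is the desired holomorphic section $\text{tr}(\hat{\varphi}_{\diamond}\otimes\hat{\varphi}_{\diamond})$.

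The step I expect to demand the most care is the first: verifying rigorously that the trace contraction is compatible with the $\bar{\partial}$-operators---and hence that $\text{tr}(\varphi_i\otimes\varphi_i)$ is genuinely $\bar{\partial}$-closed for the fixed holomorphic structure on $(\Omega^1_X)^{\otimes 2}$---while keeping careful track of the $(1,0)$-form degrees and the attendant signs when commuting $\bar{\partial}_{A_i}$ through the tensor factors. By contrast, the passage to a convergent subsequence is essentially automatic once finite dimensionality and the uniform $L^1$ bound are in hand; in particular this argument is entirely global and, unlike the convergence statements for $\varphi_i$ and $A_i$ themselves, requires no removal of the singular set $\Theta\cup Z$, since $\text{tr}(\varphi_i\otimes\varphi_i)$ is well defined and holomorphic on all of $X$.
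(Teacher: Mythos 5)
Your proof is correct and follows the same core strategy as the paper's: observe that taking the trace kills the $A_i$-dependence, so that $\text{tr}(\varphi_i\otimes\varphi_i)$ is a global holomorphic section of the single fixed holomorphic bundle $(\Omega^1_X)^{\otimes 2}$ (using $\bar{\partial}_{A_i}\varphi_i=0$ from \eqref{KWcK1}), and then extract a convergent subsequence from a uniform bound. The one place you genuinely diverge is the source of the bound and the compactness mechanism: the paper invokes the pointwise estimate $|\text{tr}(\varphi_i\otimes\varphi_i)|\le|\varphi_i|^2\le C$, which rests on Taubes' a priori $C^0$ control of the renormalized Higgs fields, whereas you use only the normalization $\|\varphi_i\|_{L^2}=1$ to obtain a uniform $L^1$ bound and then conclude via finite-dimensionality of $H^0\bigl(X,(\Omega^1_X)^{\otimes 2}\bigr)$ and equivalence of norms on a finite-dimensional space. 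Your version is slightly more self-contained---it needs nothing from Taubes' estimates beyond the definition of $\varphi_i$---and it yields smooth convergence immediately; the paper's pointwise bound, on the other hand, comes for free from the analysis already quoted in Section \ref{sec:rtaubes}, which is also what is reused in the subsequent identification of $Z$ with the zero set in Proposition \ref{prop:ZT}. One minor remark: the sign in your displayed Leibniz formula is immaterial since both terms vanish, and if one regards $\varphi_i\otimes\varphi_i$ as a degree-zero section of $\text{End}(E)^{\otimes 2}\otimes(\Omega^1_X)^{\otimes 2}$ rather than as a form-valued object, no sign arises at all, so the step you flagged as delicate is in fact harmless.
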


\begin{proof}
From the definitions of $\varphi_{i}$ and $\text{tr} ( \varphi_i \otimes \varphi_i)$,  we get 
$$| \text{tr} ( \varphi_i \otimes \varphi_i) | \leq | \varphi_{i} |^2 \leq C . $$
Thus, $\{ \text{tr} ( \varphi_i \otimes \varphi_i)\}_{i \in \N}$ has a
 convergent subsequence. 
The regularity follows since 
$\bar{\partial} ( \text{tr} ( \varphi_i \otimes \varphi_i) ) = 0$ as $\bar{\partial}_{A_{i}}
 \varphi_{i} =0$ for each $i \in \N$. 
\end{proof}

\begin{remark}
The map $\text{tr} ( \varphi_i \otimes \varphi_i)$ is called the {\it Hitchin map} in the literature of the Higgs bundles, originally introduced in \cite{Hi}.  
It also appears as the Hopf differential of a harmonic map such as in \cite{DDW}.   
\end{remark}

We denote by $T$ the zero set of the section $\text{\rm tr} ( \hat{\varphi}_{\diamond} \otimes \hat{\varphi}_{\diamond})$. 
We then prove the following.

\begin{proposition}
$Z = T \setminus \Theta'$, where $\Theta' \subset 
\Theta$. 
\label{prop:ZT}
\end{proposition}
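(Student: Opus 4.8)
The plan is to reduce the equality of the two zero sets to a single pointwise algebraic identity relating $|\varphi_i|$ to the norm of $\operatorname{tr}(\varphi_i \otimes \varphi_i)$, valid precisely because the rescaled Higgs fields are rank two and satisfy the commutator constraint $[\phi \wedge \phi]=0$ of \eqref{KWcK1}, and then to pass this identity to the limit. Concretely, I would first establish that at every point of $X$ one has
\[
|\operatorname{tr}(\varphi_i \otimes \varphi_i)| = |\varphi_i|^2
\]
(with the natural norms; only the location of the zeros matters, so the precise normalising constant is immaterial), and then take limits along a common subsequence of the two convergence statements already available from Propositions \ref{prop:C0conv} and \ref{prop:det}.

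For the pointwise identity, fix $x \in X$ and write $\varphi_i = \varphi_i^{(1)}\, dz^1 + \varphi_i^{(2)}\, dz^2$ in local holomorphic coordinates, where $\varphi_i^{(1)}, \varphi_i^{(2)}$ are traceless skew-Hermitian endomorphisms of the rank-two fibre $E_x$ (traceless since the structure group is $SU(2)$ or $SO(3)$). On a surface the only component of $[\phi_i \wedge \phi_i]$ is $[\varphi_i^{(1)}, \varphi_i^{(2)}]\, dz^1 \wedge dz^2$, so the constraint $[\phi_i \wedge \phi_i]=0$ in \eqref{KWcK1} is exactly $[\varphi_i^{(1)}, \varphi_i^{(2)}]=0$. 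Since skew-Hermitian endomorphisms are normal and these two commute, they are simultaneously unitarily diagonalisable; being traceless of rank two, there is a unitary frame of $E_x$ in which $\varphi_i = \operatorname{diag}\!\big(\sqrt{-1}\,\alpha_i,\, -\sqrt{-1}\,\alpha_i\big)$ for a single covector $\alpha_i \in \Lambda^{1,0}_x$ (here $\sqrt{-1}$ is the imaginary unit, to avoid clashing with the index $i$). A direct computation then gives $\operatorname{tr}(\varphi_i \otimes \varphi_i) = -2\,\alpha_i \otimes \alpha_i$ and $|\varphi_i|^2 = 2|\alpha_i|^2 = |\operatorname{tr}(\varphi_i \otimes \varphi_i)|$, which is the asserted identity. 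The diagonalising frame need not depend continuously on $x$ where eigenvalues collide, but this is irrelevant: both sides are norms of invariantly defined objects, so the identity holds pointwise on all of $X$ independently of any frame.

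I would then restrict to a subsequence along which both $\{|\varphi_i|\}$ converges to $|\hat{\varphi}_{\diamond}|$ in $C^0$ on compact subsets of $X \setminus \Theta$ (Proposition \ref{prop:C0conv}) and $\{\operatorname{tr}(\varphi_i \otimes \varphi_i)\}$ converges to $\operatorname{tr}(\hat{\varphi}_{\diamond} \otimes \hat{\varphi}_{\diamond})$ (Proposition \ref{prop:det}); passing to the limit in the pointwise identity yields
\[
|\operatorname{tr}(\hat{\varphi}_{\diamond} \otimes \hat{\varphi}_{\diamond})| = |\hat{\varphi}_{\diamond}|^2 \qquad \text{on } X \setminus \Theta.
\]
Since $Z$ is the zero locus of $|\hat{\varphi}_{\diamond}|$ (and $Z \subseteq X \setminus \Theta$ by construction) while $T$ is the zero locus of $\operatorname{tr}(\hat{\varphi}_{\diamond} \otimes \hat{\varphi}_{\diamond})$, the two functions have the same zero set inside $X \setminus \Theta$, so $Z = T \cap (X \setminus \Theta) = T \setminus \Theta$. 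Setting $\Theta' := T \cap \Theta \subseteq \Theta$ then gives $T \setminus \Theta' = T \setminus \Theta = Z$, as claimed.

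The main obstacle is the pointwise algebraic step, where the full strength of both hypotheses — rank two and the vanishing commutator $[\phi \wedge \phi]=0$ — is needed to force the simultaneous diagonalisation: without the commutator constraint the components $\varphi_i^{(1)}, \varphi_i^{(2)}$ need not commute, $\varphi_i$ need not be diagonalisable by a single covector, and $\operatorname{tr}(\varphi_i \otimes \varphi_i)$ could vanish at points where $\varphi_i \neq 0$, so that $T$ and $Z$ would genuinely differ. A secondary point to handle with care is that $\operatorname{tr}(\hat{\varphi}_{\diamond} \otimes \hat{\varphi}_{\diamond})$ and $|\hat{\varphi}_{\diamond}|$ arise a priori from different subsequences and different modes of convergence, so one must pass to a common subsequence before comparing them; the uniform bound $|\operatorname{tr}(\varphi_i\otimes\varphi_i)| \le |\varphi_i|^2 \le C$ used in the proof of Proposition \ref{prop:det} makes this harmless.
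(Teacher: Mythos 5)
Your reduction hinges on the pointwise identity $|\operatorname{tr}(\varphi_i \otimes \varphi_i)| = |\varphi_i|^2$, and the step used to prove it is false: the coefficients $\varphi_i^{(1)}, \varphi_i^{(2)}$ of a $(1,0)$-form $\varphi_i \in \Omega^{1,0}(\mathfrak{u}(E))$ are \emph{not} skew-Hermitian. The skew-Hermitian object is the real form $\mathfrak{a} = \phi - \phi^*$; the $(1,0)$-part $\phi$ has coefficients in the complexification, i.e.\ in $\mathfrak{sl}(2,\C)$ for an $SU(2)$ bundle, and such matrices need not be normal. The constraint $[\phi \wedge \phi]=0$ only forces $[\varphi_i^{(1)}, \varphi_i^{(2)}]=0$; it says nothing about the self-commutators $[\varphi_i^{(j)}, (\varphi_i^{(j)})^*]$, which is what normality (and hence your simultaneous unitary diagonalisation with eigenvalues $\pm\sqrt{-1}\,\alpha_i$) would require. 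A nilpotent configuration kills the identity outright: take $\varphi_i^{(1)} = \left( \begin{smallmatrix} 0 & 1 \\ 0 & 0 \end{smallmatrix} \right)$, $\varphi_i^{(2)} = 0$. This is trace-free, satisfies the commutator constraint, and has $\operatorname{tr}(\varphi_i \otimes \varphi_i) = 0$ while $|\varphi_i| \neq 0$. Such nilpotent Higgs fields genuinely occur for Simpson's equations (they populate the nilpotent cone already in Hitchin's theory on Riemann surfaces), so for fixed $i$ the zero set of $\operatorname{tr}(\varphi_i \otimes \varphi_i)$ can be strictly larger than that of $|\varphi_i|$, and your pointwise identity cannot be rescued.

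What closes the gap in the paper is precisely the extra analytic input you omitted: the equality of the two zero sets holds only \emph{in the limit}, and only because Taubes' convergence result forces the normality defect to die asymptotically away from $\Theta \cup Z$. Concretely, since $\{g_i^* \varphi_i\}$ converges in $C^0$ on compact subsets of $X \setminus \{\Theta \cup Z\}$ to $\tau_\Delta \circ \mu$ with $\mathcal{J}$ a \emph{real} line bundle, one gets $\Lambda[\varphi_i, \varphi_i^*] \to 0$ in $C^0$ there. Writing $\varphi_i = \mathfrak{c}_{i,1}\hat{e}^1 + \mathfrak{c}_{i,2}\hat{e}^2$, the constraint $[\mathfrak{c}_{i,1},\mathfrak{c}_{i,2}]=0$ for trace-free $2\times 2$ matrices gives $\mathfrak{c}_{i,2} = \alpha\,\mathfrak{c}_{i,1}$, so the vanishing of $\Lambda[\varphi_i,\varphi_i^*]$ makes each $|[\mathfrak{c}_{i,j},\mathfrak{c}_{i,j}^*]|(p)$ small; combined with the smallness of $|\det \mathfrak{c}_{i,j}|(p)$ at a point $p \in T \setminus \{\Theta \cup Z\}$ (via $\operatorname{tr}(\mathfrak{c}^2) = -2\det\mathfrak{c}$ for trace-free $\mathfrak{c}$), the inequality $|\Phi|^4 \leq 4|\det\Phi|^2 + |[\Phi,\Phi^*]|^2$ forces $|\varphi_i|(p) \to 0$, i.e.\ $p \in Z$. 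So the inequality \eqref{eq:ineqPhi} is the correct substitute for your false equality, and it works only asymptotically, by contradiction at a putative point of $T \setminus \{\Theta \cup Z\}$ — not pointwise for each $i$. Your closing paragraph shows you sensed a commutator was essential, but you attributed the role to $[\phi \wedge \phi]=0$, whereas the decisive quantity is $[\varphi_i, \varphi_i^*]$, which vanishes only in the limit and only on compact subsets of $X \setminus \{\Theta \cup Z\}$.
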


\begin{proof}
We obviously have $ Z \subset T$. 
In order to prove the opposite inclusion, 
we assume that there exists a point $p \in T \setminus \{ \Theta \cup Z\} $. 
We then recall the following inequality in \cite{Hi}, which holds for $2 \times 2$
 trace free matrices 
 $\Phi$. 
\begin{equation} 
|\Phi|^4 \leq 4 | \det \Phi |^2 
 + | [ \Phi , \Phi^{*}] |^2 . 
\label{eq:ineqPhi}
\end{equation}
With the above inequality in mind, we prove that $p \in Z  = | \hat{\varphi}_{\diamond}|^{-1}
 (0)$. 
We take a local orthonormal coframe $\{ \hat{e}^1 , \hat{e}^2 \}$ of $\Omega_{X}^{1}$ around $p$ to write 
$ \varphi_{i} = \mathfrak{c}_{i,1} \hat{e}^{1} + \mathfrak{c}_{i, 2} \hat{e}^2$. 
We view $\mathfrak{t}_{i} := \text{tr} ( \varphi_{i} \otimes \varphi_{i} ) $ as a symmetric matrix with components $\{ \mathfrak{t}_{i, \alpha \beta} \}_{\alpha, \beta = 1, 2}$. 
The $\mathfrak{t}_{i, 11}$ component is $\text{tr} ( \mathfrak{c}_{i,1} {}^2)$. 
Since $\text{tr} ( \mathfrak{c}_{i, 1}) =0$, we have $\mathfrak{t}_{i ,11} = -2 \det (\mathfrak{c}_{i,1})$. 
As $p \in T \setminus \{Z \cup \Theta \}$, 
there exists a number $N_{1} \in \Lambda$ such that for $i \geq N_{1}$, 
we have $  | 2 \det (\mathfrak{c}_{i,1} )| (p) < \varepsilon$ for a given
 $\varepsilon >0$.

Next, as described in Section \ref{sec:rtaubes}, 
$\{ \varphi_{i} \}$ converges  in $L^2_1$ topology on compact subsets of $X \setminus \{ \Theta
 \cup Z \}$ to $\tau_{\Delta} \circ \mu$ after gauge
 transformations. 
Furthermore, this convergence is in $C^{0}$ topology on compact subsets
 of $X \setminus \Theta$. 
This implies in particular $\Lambda [ \varphi_{i} , \varphi_{i}^{*}]$ converges to $0$ in
 $C^{0}$ topology on compact subsets of $X \setminus \{ \Theta \cup Z \}$,  
since $\tau_{\Delta} \circ \mu$ is abelian in $\mathfrak{u} (E_{\Delta} )$. 
We take a compact subset $B$ of $X \setminus \{ \Theta \cup Z \}$, 
which contains the point $p$. 
Then one concludes that the limit $ \Lambda [ \hat{\varphi}_{\diamond} ,
 \hat{\varphi}_{\diamond}^{*} ] $ is identically zero on the whole of $B$. 
Thus, for a given $\varepsilon >0$, there exists a number  $N_2 \in \Lambda$ such that 
$| \Lambda [ \varphi_{i} , \varphi_{i}^{*} ] | (p)  = | [ \mathfrak{c}_{i,1} , \mathfrak{c}_{i,1}^{*} ]  + 
[ \mathfrak{c}_{i,2}  , \mathfrak{c}_{i,2}^{*} ] | (p) < \varepsilon$ for $i \geq
 N_2$. 
On the other hand, from the second equation in \eqref{KWcK1}, we have 
$[ \mathfrak{c}_{i,1} , \mathfrak{c}_{i,2} ] =0$, and both $\mathfrak{c}_{i,1}$ and $\mathfrak{c}_{i,2}$ are trace free, we obtain that 
$\mathfrak{c}_{i,2} = \alpha  \mathfrak{c}_{i,1}$ with $\alpha$ being a complex number. 
Thus we have 
$ | ( 1 + |\alpha|^2) [ \mathfrak{c}_{i,1}  , \mathfrak{c}_{i,1}^{*} ] | (p) < \varepsilon$ for $i \geq N_{2}$.

Summarizing these above with the inequality \eqref{eq:ineqPhi}, 
we find an $N' \in \Lambda$ for a given $\varepsilon >0$ such
 that $|\mathfrak{c}_{i,1} | (p) < \varepsilon$  for all $i \geq N'$.  
A similar argument does for $\mathfrak{c}_{i ,2}$.  
Hence $p \in Z = |\hat{\varphi}_{\diamond}|^{-1} (0)$. Thus the assertion holds.  
\end{proof}

Since $\text{\rm tr} ( \hat{\varphi}_{\diamond} \otimes \hat{\varphi}_{\diamond})$ 
is a holomorphic section of the holomorphic  
bundle $( \Omega^1_{X})^{\otimes 2}$ on $X$,  $T$ has the structure of an analytic subvariety. 
We thus obtain the following.

\begin{Corollary}
$Z$ has the structure of an analytic subvariety of $X$. 
\label{cor:hs}
\end{Corollary}

\begin{example}
We give simple examples of the above $T$ for the case that $X$ is the direct product of two Riemann surfaces. 
(i) When $X= \mathbb{P}^1 \times \mathbb{P}^1$, there are no non-trivial $\phi \in \Gamma ( \mathfrak{u} (E) \otimes \Omega^{1}_{X})$ satisfying the equations 
from the beginning.  
(ii) When $X = \mathbb{P}^1 \times T^2$, then $\Omega^{1}_{X} 
\cong \Omega^{1}_{\mathbb{P}^1} \oplus \Omega^{1}_{T^2} \cong K_{\mathbb{P}^1} \oplus \underline{\C}$. 
Thus, the holomorphic section $\text{\rm tr} ( \hat{\varphi}_{\diamond} \otimes \hat{\varphi}_{\diamond})$ of $( \Omega^{1}_{X} )^{\otimes 2}$ has the form 
$( \underline{0} \oplus \underline{a} ) \otimes ( \underline{0} \oplus \underline{b} )$, 
where $a, b \in \C$ with either $a$ or $b$ being non-zero. 
Since $a \neq 0$ or $b \neq 0$, thus $Z = \emptyset$.  
(iii) If $X = \mathbb{P}^1 \times \Sigma_{g}$ then $\Omega_X^1 \cong \Omega_{\mathbb{P}^1} ^1 \oplus \Omega_{\Sigma_g}^1 \cong 
K_{\mathbb{P}^1} \oplus K_{\Sigma_g}$; 
and $\text{tr} ( \hat{\varphi}_{\diamond} \otimes \hat{\varphi}_{\diamond} )$ of $(\Omega_X^1)^{\otimes 2}$ has the form $(\underline{0} \oplus s ) \times (\underline{0} \oplus t )$, where $s,t \in \Gamma( \Sigma_{g})$. 
Hence $Z = ( \mathbb{P}^1 \times s^{-1} (0) ) \cap ( \mathbb{P}^1 \times t^{-1} (0))$. 
This is generically an empty set. 
(iv) For the case that $X = T^2 \times T^2$, $\text{\rm tr} ( \hat{\varphi}_{\diamond} \otimes \hat{\varphi}_{\diamond})$ has the form 
$( \underline{a} \oplus \underline{b}) \otimes ( \underline{c} \oplus \underline{d})$, 
where $a, b, c ,d \in \C$. 
Since at least one of $a, b, c, d$ can not be zero, so $Z = \emptyset$. 
(v) When $X = T^2 \times \Sigma_{g}$, where $\Sigma_{g}$ is a Riemann surface with genus $g > 1$, 
then $\text{\rm tr} ( \hat{\varphi}_{\diamond} \otimes \hat{\varphi}_{\diamond})$ has the form $( \underline{a} \oplus s  ) \otimes ( \underline{b} \oplus t)$, where $a, b \in \C$ and $s, t \in \Gamma (K_{\Sigma_{g}})$.  
Thus, if $a=0$ and $b=0$, $Z = (T^2 \times s^{-1} (0) ) \cap (T^2 \times t^{-1} (0) )$; 
and otherwise $Z = \emptyset$. 
(vi) When $X = \Sigma_{g} \times \Sigma_{h}$ with $g, h >1$, 
then 
$\text{\rm tr} ( \hat{\varphi}_{\diamond} \otimes \hat{\varphi}_{\diamond})$ 
has the form $(s \oplus t) \otimes (v \oplus w)$, where $s, v \in \Gamma (K_{\Sigma_{g}} )$, 
$t , w \in \Gamma (K_{\Sigma_{h}})$. 
Then $Z$ is $(s^{-1} (0) \times t^{-1} (0) ) \cap ( v^{-1}(0) \times w^{-1}(0))$. 
For instance, 
if $s$ and $t$ are identically zero; and one of $v$ or $w$ is identically zero,  then 
$Z$ is either $\Sigma_g \times w^{-1} (0)$ or $v^{-1} (0) \times \Sigma_h $. 
Or, if $v$ and $w$ are identically zero; and one of $s$ and $t$ is identically zero, 
then $Z$  is either  
$\Sigma_g \times t^{-1} (0)$ or $s^{-1} (0) \times \Sigma_h$. 
\qed
\end{example}

\section{The singular sets of solutions to the Vafa--Witten equations}
\label{sec:VW}

\subsection{Results by Taubes}
\label{sec:rtaubesVW}

The Vafa--Witten equations look similar to the Kapustin--Witten equations, but one of the crucial differences for us is that there is no good control of the curvatures of connections. 
However, Taubes managed to prove the convergence of the Higgs fields outside a singular set in a similar style to the case of the Kapustin--Witten equations. 
Here we briefly state some of  his results. 
Firstly, in the same way to the Kapustin--Witten case, Taubes obtained the following.

\begin{proposition}[\cite{T4}]
Let $\{ (A_n , B_n ) \}$ be a sequence of solutions to the Vafa--Witten equations. 
Assume that $|| B_n ||_{L^2}$ diverges as $n$ goes to the infinity. 
We rescale $B_n$ by $|| B_n ||_{L^2}$, namely, put $\beta_n  := B_n / || B_n||_{L^2}$ for each $n \in \mathbb{N}$. 
Then $\{ \beta_n \}$ has a converging subsequence in $L^2_1$ and $C^0$ topologies on compact subsets of $X$. 
\end{proposition}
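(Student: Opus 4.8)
The plan is to extract every estimate from the Higgs field $B_n$ alone, through a Bochner--Weitzenb\"{o}ck formula, \emph{without} ever controlling the connections $A_n$. This detour is forced by the feature emphasised above: unlike in the Kapustin--Witten case there is no handle on the curvatures, so Uhlenbeck compactness is unavailable and all information must be read off from the rescaled field. First I would record the rescaled equations. Writing $r_n := \| B_n \|_{L^2} \to \infty$, $\beta_n := B_n / r_n$ (so that $\| \beta_n \|_{L^2} = 1$) and $\gamma_n := \Gamma_n / r_n$, the Vafa--Witten equations \eqref{VW1}--\eqref{VW2} become
\begin{gather*}
d_{A_n}^* \beta_n + d_{A_n} \gamma_n = 0, \\
F_{A_n}^+ + r_n^2 \big( [\beta_n . \beta_n] + [\beta_n , \gamma_n] \big) = 0.
\end{gather*}
The second line already exhibits $|F_{A_n}^+| \sim r_n^2$, which is precisely why the connections cannot be tamed.

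Next I would compute $\tfrac12 \Delta |\beta_n|^2 = \langle \nabla_{A_n}^* \nabla_{A_n} \beta_n , \beta_n \rangle - |\nabla_{A_n} \beta_n|^2$ and rewrite $\nabla_{A_n}^* \nabla_{A_n} \beta_n$ through the Hodge Laplacian $d_{A_n} d_{A_n}^* + d_{A_n}^* d_{A_n}$ acting on the self-dual two-form $\beta_n$. Using the first rescaled equation to replace $d_{A_n}^* \beta_n$ by $-d_{A_n} \gamma_n$, and the second to replace $F_{A_n}^+$ by $-r_n^2 ( [\beta_n . \beta_n] + [\beta_n , \gamma_n])$, the only term carrying the dangerous factor $r_n^2$ acquires the schematic form $r_n^2 \langle [\, [\beta_n . \beta_n] + [\beta_n , \gamma_n] , \beta_n \,] , \beta_n \rangle$. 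The central claim --- and what I expect to be the main obstacle --- is that, by the algebraic structure of the quadratic map $B \mapsto [B.B]$ on $\Lambda^+ \otimes \mathfrak{g}_P$, this term has a definite favourable sign, so that it lands on the good side of the inequality and may simply be discarded. This is the exact analogue of the mechanism behind Proposition \ref{prop:C0conv} in the Kapustin--Witten case, where $F_{A_n}^+ = r_n^2 [a_n \wedge a_n]^+$ fed back a manifestly non-negative quartic term. Granting this sign, one obtains
\begin{equation*}
\tfrac12 \, d^* d |\beta_n|^2 + |\nabla_{A_n} \beta_n|^2 \le c \, |\beta_n|^2,
\end{equation*}
with $c$ depending only on the Riemannian geometry of $X$ and \emph{not} on $n$.

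From here the conclusions are routine. Integrating over the closed manifold $X$ annihilates the exact term and yields $\| \nabla_{A_n} \beta_n \|_{L^2}^2 \le c$, a uniform $L^2_1$ bound. Kato's inequality $\big| d |\beta_n| \big| \le |\nabla_{A_n} \beta_n|$ converts the pointwise estimate into the weak subsolution inequality $d^* d |\beta_n| \le c |\beta_n|$; Moser iteration then gives $\| \beta_n \|_{L^\infty} \le C \| \beta_n \|_{L^2} = C$, and local De Giorgi--Nash--Moser estimates upgrade this to a uniform $C^{0,\alpha}$ bound on compact subsets. The companion field $\gamma_n$ must be controlled by the analogous integral identity coming from the first equation, which I would also use to check that the mixed term $[\beta_n , \gamma_n]$ does not spoil the sign above. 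Finally, with uniform $L^2_1$ and $C^{0,\alpha}$ bounds in hand, Banach--Alaoglu together with Rellich's theorem extracts a subsequence converging weakly in $L^2_1$ and strongly in $L^2$, while Arzel\`{a}--Ascoli gives $C^0$ convergence on compact subsets, as claimed. (As in Section \ref{sec:rtaubes}, the convergence of the sections themselves is to be understood after the accompanying choice of gauge.)
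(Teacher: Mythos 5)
Your Weitzenb\"{o}ck mechanism is the right starting point, and the sign you flag as the main obstacle is in fact a known pointwise identity: substituting $F_{A}^+ = -[B.B]-[B,\Gamma]$ into the second-order equation produces $2|[B.B]|^2$ and $|[B,\Gamma]|^2$ on the favourable side (see \cite{M} or \cite{Tan1}; on a closed manifold an integration-by-parts argument even gives $d_A\Gamma=0$ and $[B,\Gamma]=0$, so the $\gamma_n$ bookkeeping you defer can be dispensed with entirely). Integrating then gives the uniform bound $\|\nabla_{A_n}\beta_n\|_{L^2}\le c$, and Kato plus Moser iteration give a uniform $L^\infty$ bound --- this much is correct and is how the a priori estimates in \cite{T4} begin (note the paper itself gives no proof; it quotes Taubes). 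But be careful about what object converges: the paper's notation $|\hat{\beta}_\diamond|$ for the limit shows the assertion concerns the norm functions $|\beta_n|$, which by Kato lie in a fixed ball of $L^2_1(X)$. Your version --- convergence of the sections $\beta_n$ themselves ``after the accompanying choice of gauge'' --- is not available here: the connections $A_n$ are completely uncontrolled ($|F_{A_n}^+|\sim r_n^2$, no Uhlenbeck gauges), the spaces $L^2_1$ defined by $\nabla_{A_n}$ vary with $n$, and gauged convergence of $\beta_n$ is only obtained later, away from $Z'$, in the subsequent theorem.

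The genuine gap is the $C^0$ step, which you declare routine. The Weitzenb\"{o}ck inequality yields only a one-sided, subsolution-type inequality $d^*d\,|\beta_n|^2 \le 2c\,|\beta_n|^2$. De Giorgi--Nash--Moser theory applied to subsolutions gives local boundedness ($L^\infty$ from $L^2$) but \emph{not} H\"{o}lder continuity, and hence no equicontinuity: H\"{o}lder estimates require a two-sided equation, which $|\beta_n|$ does not satisfy, while the elliptic system satisfied by $\beta_n$ itself has the uncontrolled coefficients $A_n$, so no Schauder or $C^{0,\alpha}$ estimate applies to it either. Uniformly bounded subsolutions need not be equicontinuous --- in dimension two the functions $u_n(x)=|x|^{1/n}$ are subharmonic, non-negative, bounded by $1$ on the unit ball, and converge to a discontinuous limit --- so Arzel\`{a}--Ascoli cannot be invoked. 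This is exactly why the $C^0$ convergence of $\{|\beta_n|\}$ is the analytic heart of \cite{T4}: Taubes establishes it through curvature-independent pointwise estimates, monotonicity arguments for frequency-type quantities such as $r^{-2}\int_{B_r}|\nabla_{A}\beta|^2$, and a separate proof that the limit function is continuous. Your proposal reproduces the integral bounds correctly but is missing the idea that carries the statement from ``uniformly bounded'' to ``subconvergent in $C^0$''.
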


We denote by $| \hat{\beta}_{\diamond}|$ the limit and define a closed subset $Z'$ of $X$ by the zero set of $| \hat{\beta}_{\diamond}|$. 
Despite the fact that no obvious control of the curvature of connections, 
Taubes proved the following. 

\begin{theorem}[\cite{T4}]
The Hausdorff dimension of $Z'$ is at most two, and there exists a real line bundle $\mathcal{I}$ on $X \setminus Z'$; 
a section $\nu$ of $\mathcal{I} \otimes \Lambda^+$ with $d \nu =0$ and $|\nu| = |\hat{\beta}_{\diamond}|$; and 
a sequence of isometric homomorphisms $\{ \sigma_n \}$ from $\mathcal{I}$ to $\mathfrak{g}_P |_{X \setminus Z'}$ such that $\{ \beta_n - \sigma_n \circ \nu \}$ converges to zero in $C^0$-topology on compact subsets of $X$. 
\end{theorem}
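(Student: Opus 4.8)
The plan is to adapt the Kapustin--Witten argument reviewed above, the essential new ingredient being a Weitzenb\"ock estimate for the Vafa--Witten system in which the connection curvature is eliminated, so that no bound on $F_{A_n}$ is required. Put $\beta_n = B_n/\| B_n \|_{L^2}$ and $\gamma_n := \Gamma_n/\| B_n \|_{L^2}$, so that the rescaled equations read
\begin{gather*}
d_{A_n}^* \beta_n + d_{A_n}\gamma_n = 0, \\
\| B_n \|_{L^2}^{-2} F_{A_n}^+ + [\beta_n . \beta_n] + [\beta_n,\gamma_n] = 0.
\end{gather*}
First I would feed the second equation into the Bochner--Weitzenb\"ock formula for $B_n$, replacing the curvature term $F_{A_n}^+$ by $-[B_n.B_n]-[B_n,\Gamma_n]$ and integrating by parts via the first equation; the connection curvature then cancels and one is left with an estimate of the shape $\int_X |\nabla_{A_n}B_n|^2 + \int_X |[B_n.B_n]|^2 \le c\int_X |B_n|^2$, where $c$ depends only on the Riemannian curvature of $X$. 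Dividing by $\| B_n \|_{L^2}^2$ recovers the uniform $L^2_1$ bound behind the preceding Proposition and, crucially, forces $\int_X |[\beta_n.\beta_n]|^2 = O(\| B_n \|_{L^2}^{-2})\to 0$. A companion maximum-principle bound for $\Gamma_n$ shows $\gamma_n\to 0$, so the $C^0$ limit satisfies the pointwise identity $[\hat\beta_\diamond.\hat\beta_\diamond]=0$ on $X$.

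Next I would establish the pointwise algebraic lemma that, for $G=SU(2)$ or $SO(3)$, a section $B$ of $\mathfrak{g}_P\otimes\Lambda^+$ with $[B.B]=0$ is necessarily of rank one, i.e. $B = \sigma\otimes\nu$ with $\sigma$ a section of $\mathfrak{g}_P$ and $\nu$ a self-dual two-form, whence $|B| = |\sigma|\,|\nu|$; this is the Vafa--Witten analogue of the inequality \eqref{eq:ineqPhi} used in Proposition \ref{prop:ZT}. On $X\setminus Z'$, where $|\hat\beta_\diamond|>0$, the direction $\sigma$ is determined up to sign and spans a real line subbundle $\mathcal{I}\subset\mathfrak{g}_P$; normalizing $|\sigma|=1$ yields the isometric homomorphisms $\sigma_n:\mathcal{I}\to\mathfrak{g}_P$ and a self-dual section $\nu$ with $|\nu|=|\hat\beta_\diamond|$, and the $C^0$ convergence of $\beta_n$ gives $\beta_n-\sigma_n\circ\nu\to 0$ in $C^0$ on compact subsets of $X\setminus Z'$. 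For the closedness, I would show that $\mathcal{I}$ carries a flat orthogonal connection for which $\sigma$ is parallel; the limit of the first equation $d_{A_n}^*\beta_n+d_{A_n}\gamma_n=0$ then reads $d^*\nu=0$, and since $\nu$ is self-dual one has $d^*\nu=-*d*\nu=-*d\nu$, so $d^*\nu=0$ is equivalent to $d\nu=0$ and $\nu$ is harmonic.

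The hard part will be the bound $\dim_{\mathcal H}Z'\le 2$. The route I would take is to show that $|\hat\beta_\diamond|$ satisfies a weak differential inequality of the form $\Delta|\hat\beta_\diamond|^2 \ge -c\,|\hat\beta_\diamond|^2$ on all of $X$, obtained by combining the same curvature-free Weitzenb\"ock identity with a Kato inequality; then to introduce an Almgren-type frequency function for the limiting section, prove its almost-monotonicity in the radius, and run the standard dimension-reduction and stratification argument from nodal-set theory to conclude that the vanishing locus has Hausdorff codimension at least two. The genuine obstacle, and what distinguishes the Vafa--Witten case from the Kapustin--Witten one, is precisely the absence of any a priori control of the curvatures $F_{A_n}$: whereas in the Kapustin--Witten setting the requisite inequalities and the monotonicity are supplied by gauge-theoretic estimates on $F_{A_n}$, here every estimate --- the Weitzenb\"ock bound, the differential inequality, and the well-definedness and almost-monotonicity of the frequency function --- must be extracted directly from the two equations for $B_n$, bounding $|\nabla_{A_n}\beta_n|$ without ever bounding the connection. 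Once this analytic package is in place the remaining structure follows formally as above.
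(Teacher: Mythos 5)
First, a point of comparison: the paper contains no proof of this statement. It is quoted from Taubes \cite{T4} as background (Section \ref{sec:rtaubesVW} is a review), so your proposal can only be measured against Taubes' argument itself. Measured that way, your outline follows the actual route: the curvature-free integral Weitzenb\"ock identity (also in Mares' thesis \cite{M} and in \cite{Tan1}) giving $\int_X |\nabla_{A_n}\beta_n|^2 + r_n^2\int_X|[\beta_n.\beta_n]|^2 \le c$ with $c$ depending only on the Riemannian curvature; the algebraic fact that for $SU(2)$ or $SO(3)$ the bracket $[B.B]$ is essentially the adjugate of $B$ viewed as a homomorphism $\Lambda^+\to\mathfrak{g}_P$, so $[B.B]=0$ forces rank one and $B=\sigma\otimes\nu$; the resulting $\Z/2$-harmonic self-dual two-form $(\mathcal{I},\nu)$ on $X\setminus Z'$ with $d\nu=d^*\nu=0$; and a frequency-function analysis for the dimension bound, which Taubes had already developed in \cite{T3}.

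Two of your steps are glossed in a way that hides real difficulties. First, to extract the pointwise decomposition $\beta_n\approx\sigma_n\circ\nu$ on $X\setminus Z'$ you need pointwise, not merely $L^2$, smallness of $[\beta_n.\beta_n]$: the integral identity only yields $\|[\beta_n.\beta_n]\|_{L^2}^2 = O(r_n^{-2})$, and upgrading this to local $C^0$ control of $\beta_n$ and of its rank-one defect --- by local Bochner/$\epsilon$-regularity arguments for $|\beta_n|$ that again must avoid any bound on $F_{A_n}$ --- is a substantial portion of \cite{T4}, not a formality; your $\sigma_n$ must be built from the direction of $\beta_n$ itself, since there is no limiting connection or bundle here. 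Second, ``the standard dimension-reduction and stratification argument from nodal-set theory'' is the wrong model: nodal sets of single-valued solutions of second-order elliptic equations have codimension \emph{one}, so a differential inequality $\Delta|\hat\beta_\diamond|^2 \ge -c|\hat\beta_\diamond|^2$ plus generic nodal-set technology cannot by itself give $\dim_{\mathcal{H}} Z' \le 2$. The codimension-two conclusion uses the first-order (Dirac-type) nature of the system $d\nu = d^*\nu = 0$ together with the $\Z/2$-monodromy of $\mathcal{I}$ around $Z'$ (the local branching model is of $\mathrm{Re}(z^{1/2})$ type), and it is precisely for such two-valued harmonic sections that Taubes' almost-monotone frequency function in \cite{T3} is engineered. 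With those two caveats made explicit, your plan is a fair reconstruction in outline of the proof in \cite{T4}, which this paper itself does not reproduce.
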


\subsection{The equation on compact K\"{a}hler surfaces and the singular sets}
\label{sec:singVW}

As in the case of the Kapustin--Witten equations, we have the complex form of the equations, when the underlying manifold is a compact  K\"{a}hler surface (see \cite[Ch.7]{M} or \cite[\S  6(iii)]{N}). 
The exact form is as follows: 
Let $X$ be a compact K\"{a}hler surface, and let $E \to X$ be a Hermitian vector bundle of rank $r$ over $X$.  
Then the Vafa--Witten equations \eqref{VW1}, \eqref{VW2} become the following equations seeking for a pair $(A, \phi)$ consisting of a connection $A$ of $E$ and a section $\phi$ of $\mathfrak{u} (E) \otimes K_X$, where $K_X$ is the canonical bundle of $X$. 
\begin{gather}
\bar{\partial}_{A} \phi =0 , 
\label{VWk1} \\
F_A^{1,1} \wedge \omega + [ \phi , \phi^*] = 0, \quad F_A^{0,2} =0 . 
\label{VWk2} 
\end{gather}
Note that $[\phi \wedge \phi ]=0$ automatically holds as $K_X$ is a line bundle.

Then almost the same argument works for the Vafa--Witten case as well,  even in a simpler way.  
Let us consider a sequence of solutions $\{ (A_n , \phi_n) \}_{n \in \N}$ to the equations \eqref{VWk1}, \eqref{VWk2}. 
We also assume here that the rank of $E$ is two. 
We are interested in the case that $|| \phi ||_{L^2}$ diverges as $n$ goes to the infinity. 
So put $r_n := || \phi_n ||_{L^2}$ for each $n \in \mathbb{N}$ and suppose that $\{ r_n \}_{n \in \N}$ has no converging subsequence. 
We then put $\varphi_n := \phi_n / || \phi_n ||_{L^2}$ for each $n \in \N$ and consider, 
in this case, the determinant $\det \varphi_n$, 
which is a section of $K_{X}^{\otimes 2}$ for the rank two case. 
As in the case of the Kapustin--Witten equations, we firstly get the following. 

\begin{lemma}
There exists $\Lambda \subset \N$ such that 
$\{ \det \varphi_i \}_{i \in \Lambda} $ converges in $C^{\infty}$-topology to a holomorphic section $\det \varphi_{\diamond}$ of $K_X^{\otimes 2}$. 
\end{lemma}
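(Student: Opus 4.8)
The plan is to reproduce the argument of Proposition~\ref{prop:det} almost verbatim, the essential point being again that the invariant chosen---here $\det$ rather than $\text{tr}(\cdot\otimes\cdot)$---is conjugation invariant and therefore insensitive to the connection. For $E$ of rank two, $\det\varphi_n$ is a globally defined section of $K_X^{\otimes2}$: the determinant of an $\text{End}(E)$-valued object is frame independent, and the quadratic dependence on the $K_X$-factor of $\varphi_n$ places its value in $K_X^{\otimes2}$.

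First I would record the uniform pointwise estimate $|\det\varphi_n|\le c\,|\varphi_n|^2$, which holds with a fixed constant $c$ by elementary linear algebra for $2\times2$ matrices. The Proposition of \cite{T4} recalled above produces a subsequence of $\{\beta_n\}$---equivalently of the complex-form fields $\varphi_n$---converging in the $C^0$ topology on compact subsets of $X$; since $X$ is itself compact, this yields a uniform bound $|\varphi_n|\le C$ on all of $X$, whence $|\det\varphi_n|\le cC^2$ uniformly in $n$.

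Next I would check that each $\det\varphi_n$ is holomorphic. Because $\det$ is invariant under the gauge action, the connection terms cancel exactly as the commutator terms do under the trace in Proposition~\ref{prop:det}, so the Leibniz rule gives $\bar{\partial}(\det\varphi_n)=0$ as a consequence of the rescaling of the first equation $\bar{\partial}_{A_n}\varphi_n=0$ in \eqref{VWk1}. Hence $\det\varphi_n\in H^0(X,K_X^{\otimes2})$ for every $n\in\N$.

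Finally, since $X$ is compact K\"{a}hler, the space $H^0(X,K_X^{\otimes2})$ is finite dimensional; a uniformly bounded sequence in a finite-dimensional space has a convergent subsequence, and its limit $\det\varphi_{\diamond}$ again lies in $H^0(X,K_X^{\otimes2})$, i.e.\ is a holomorphic section (equivalently, a $C^0$-limit of holomorphic sections is holomorphic). I anticipate no serious obstacle; the only two steps deserving care are the cancellation of the connection terms in $\bar{\partial}(\det\varphi_n)$, which is the same mechanism used for the trace in the Kapustin--Witten case, and the verification that Taubes' $C^0$ convergence really furnishes a bound on the whole of $X$ rather than merely away from a finite bubbling set---this being guaranteed here by the compactness of $X$ together with the stated form of the cited proposition.
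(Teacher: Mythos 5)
Your proposal is correct and follows essentially the same route as the paper, which proves this lemma exactly as Proposition~\ref{prop:det}: the pointwise bound $|\det\varphi_n|\le c|\varphi_n|^2$ together with Taubes' uniform $C^0$ control gives boundedness, $\bar{\partial}(\det\varphi_n)=0$ follows from $\bar{\partial}_{A_n}\varphi_n=0$ since the determinant is insensitive to the connection, and a bounded sequence of holomorphic sections subconverges to a holomorphic section. Your explicit appeal to the finite dimensionality of $H^0(X,K_X^{\otimes 2})$ merely makes precise the compactness step the paper leaves implicit.
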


We denote by $D$ the zero set of $\det \varphi_{\diamond}$. 
Then we have the following. 

\begin{proposition}
$Z' = D$. 
\end{proposition}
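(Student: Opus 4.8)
The plan is to mirror the structure of Proposition \ref{prop:ZT}, exploiting the simplification that in the Vafa--Witten case the Higgs field $\phi$ is a section of $\mathfrak{u}(E) \otimes K_X$ with $K_X$ a \emph{line} bundle, so $\varphi_n$ may be written as $\varphi_n = \mathfrak{c}_n \otimes e$ for a local unit frame $e$ of $K_X$ and a trace-free skew-Hermitian matrix $\mathfrak{c}_n$, and $\det \varphi_n = (\det \mathfrak{c}_n)\, e^{\otimes 2}$. First I would establish the easy inclusion $D \subseteq Z'$: since $|\det \varphi_n| \le C |\varphi_n|^2$, the limiting section $\det \varphi_{\diamond}$ vanishes wherever $|\hat{\beta}_{\diamond}|$ does, so $Z' \subseteq D$ pointwise; conversely $|\det \mathfrak{c}_n| \le \tfrac{1}{2}|\mathfrak{c}_n|^2$ forces $\det \varphi_{\diamond}$ to vanish at any zero of the limit, giving $Z' \subseteq D$. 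The substantive direction is $D \subseteq Z'$, i.e.\ that a zero of $\det \varphi_{\diamond}$ outside $Z'$ cannot occur.

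For the reverse inclusion I would argue by contradiction, assuming there is a point $p \in D \setminus Z'$, and invoke the two-by-two trace-free matrix inequality \eqref{eq:ineqPhi}, namely $|\mathfrak{c}|^4 \le 4|\det \mathfrak{c}|^2 + |[\mathfrak{c},\mathfrak{c}^*]|^2$. The first term on the right is controlled because $p \in D$: the $C^0$ convergence of $\det \varphi_n \to \det \varphi_{\diamond}$ on compact subsets of $X \setminus Z'$ makes $|\det \mathfrak{c}_n|(p)$ as small as we like for large $n$. The second term is controlled by the Taubes limiting structure recalled in Section \ref{sec:rtaubesVW}: there the rescaled fields converge to $\sigma_n \circ \nu$ with $\nu$ a section of a \emph{real} line bundle, so the commutator $[\beta_n, \beta_n^*]$ — equivalently $[\mathfrak{c}_n, \mathfrak{c}_n^*]$ — tends to zero in $C^0$ on compact subsets of $X \setminus Z'$. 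Feeding both bounds into \eqref{eq:ineqPhi} forces $|\mathfrak{c}_n|(p) \to 0$, hence $|\hat{\beta}_{\diamond}|(p) = 0$, so $p \in Z'$, contradicting $p \in D \setminus Z'$.

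The Kapustin--Witten proof needed an extra step, deducing $\mathfrak{c}_{i,2} = \alpha \mathfrak{c}_{i,1}$ from the constraint $[\phi \wedge \phi] = 0$, precisely because $\phi$ had two independent coframe components there; in the present setting that difficulty evaporates, since $K_X$ being a line bundle means there is a single matrix $\mathfrak{c}_n$ to bound and, as the excerpt notes, $[\phi \wedge \phi] = 0$ holds automatically. This is why the argument is genuinely simpler here and why I expect no serious obstacle in adapting it.

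The only point requiring care — and the step I would treat as the main obstacle — is justifying that $[\mathfrak{c}_n, \mathfrak{c}_n^*] \to 0$ in $C^0$ at the \emph{specific} point $p$, rather than merely in $L^2_1$ on compact sets. This requires that $p$ lie in a compact subset $B \subset X \setminus Z'$ on which the Taubes convergence is uniform, and that the limiting commutator $[\hat{\beta}_{\diamond}, \hat{\beta}_{\diamond}^*]$ vanish identically on $B$ because $\hat{\beta}_{\diamond}$ takes values in the image of the real line bundle under the isometric homomorphism; one then extracts a number $N \in \Lambda$ with $|[\mathfrak{c}_n, \mathfrak{c}_n^*]|(p) < \varepsilon$ for $n \ge N$, exactly as in the proof of Proposition \ref{prop:ZT}. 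Once this pointwise commutator bound is in hand, combining it with the $\det$ bound through \eqref{eq:ineqPhi} closes the argument and yields $Z' = D$.
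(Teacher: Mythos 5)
Your proposal is correct and follows essentially the same route as the paper, whose proof simply invokes the Kapustin--Witten argument (Proposition \ref{prop:ZT}) with the inequality \eqref{eq:ineqPhi} applied directly to $\det \varphi_n$, including your observation that $K_X$ being a line bundle eliminates the $\mathfrak{c}_{i,2} = \alpha\, \mathfrak{c}_{i,1}$ step. One trivial slip: you announce the ``easy inclusion $D \subseteq Z'$'' while in fact proving $Z' \subseteq D$ there --- a mislabel, not a gap, since the substantive direction $D \subseteq Z'$ is then handled correctly.
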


\begin{proof}
The proof goes in the same way as in the case of the Kapustin--Witten equations (Proposition \ref{prop:ZT}). 
Namely, we use the inequality 
\begin{equation} 
|\Phi|^4 \leq 4 | \det \Phi |^2 
 + | [ \Phi , \Phi^{*}] |^2 . 
\end{equation}
for $2 \times 2$
 trace free matrices $\Phi$ again.  
For the Vafa--Witten case, we can directly use this inequality to obtain a bound on $| \varphi_i|$ in terms of those on $| \det \varphi_i |$ and $[\varphi_{i} , \varphi_{i}^{*}]|$. 
We omit the repetition of following the argument in the proof of Proposition \ref{prop:ZT} here. 
\end{proof}

Hence we obtain the following. 

\begin{Corollary}
$Z'$ has the structure of an analytic subvariety of $X$.  
\label{cor2}
\end{Corollary}


\begin{flushleft}
Mathematical Institute, University of Oxford \\
Radcliffe Observatory Quarter, Woodstock Road, Oxford, OX2 6GG, U.K.\\
tanaka@maths.ox.ac.uk
\end{flushleft}


\end{document}